\documentclass[11pt,reqno]{amsart}

\usepackage[T1]{fontenc}
\usepackage{lmodern}
\usepackage[a4paper,margin=27mm]{geometry}
\usepackage{amsmath,amssymb,mathtools,microtype}

\usepackage[
  colorlinks=true,
  linkcolor=blue,
  citecolor=blue,
  urlcolor=blue
]{hyperref}

\hypersetup{
  pdftitle={Malliavin Smoothness of Hermite Processes of Arbitrary Order and Their Wiener Integrals},
  pdfauthor={Elina Moldavskaya},
  pdfsubject={Research manuscript, version 6, September 10, 2026}
}

\numberwithin{equation}{section}

\newtheorem{theorem}{Theorem}[section]
\newtheorem{proposition}[theorem]{Proposition}
\newtheorem{lemma}[theorem]{Lemma}
\newtheorem{corollary}[theorem]{Corollary}

\theoremstyle{definition}
\newtheorem{definition}[theorem]{Definition}

\theoremstyle{remark}
\newtheorem{remark}[theorem]{Remark}

\newcommand{\E}{\mathbb E}
\newcommand{\Prob}{\mathbb P}
\newcommand{\R}{\mathbb R}
\newcommand{\N}{\mathbb N}
\newcommand{\Hh}{\mathfrak H}
\newcommand{\HT}{\mathfrak H_T}
\newcommand{\Dt}{D_T}
\newcommand{\cK}{\mathcal K}
\newcommand{\Sch}{\mathcal S}
\newcommand{\Sphere}{\mathbb S}
\newcommand{\dd}{\,\mathrm d}
\newcommand{\norm}[1]{\lVert #1\rVert}
\newcommand{\ip}[2]{\langle #1,#2\rangle}
\newcommand{\ind}{\mathbf 1}

\DeclareMathOperator{\Ran}{Ran}
\DeclareMathOperator{\rank}{rank}

\DeclareMathOperator{\adj}{adj}

\title[Malliavin Smoothness of Hermite Processes]
{Malliavin Smoothness of Hermite Processes\\
of Arbitrary Order and Their Wiener Integrals}

\author{Elina Moldavskaya}

\thanks{Technion---Israel Institute of Technology, Israel.
E-mail: \href{mailto:elina.mol@technion.ac.il}{elina.mol@technion.ac.il}.}

\date{}

\subjclass[2020]{Primary 60H07; Secondary 60G22, 60G18}

\keywords{Hermite process, Wiener integral, Wiener chaos, Malliavin nondegeneracy,
negative moments, small-ball estimates, anti-concentration, Schwartz density,
weighted kernels, fractional integral}

\begin{document}
\begin{abstract}
We establish Malliavin nondegeneracy for every weighted integral against a singular Hermite kernel, with all negative moments of the Malliavin derivative norm finite, uniformly over admissible compact families of bounded deterministic weights. In particular, all finite-dimensional distributions of Hermite processes of every fixed finite order have Schwartz densities, as do vectors of non-overlapping increments and, more generally, finite families of Wiener integrals with respect to a Hermite process. For such vectors, linear independence of the weights is both necessary and sufficient for absolute continuity; under this condition, all inverse Malliavin determinant moments exist and the joint density belongs to the Schwartz space, with bounds that are again uniform over compact families. The proof rests on a nonvanishing condition on compact weight families that is preserved under directional differentiation. An analytic-tail property of fractional transforms of the directions verifies this condition at every chaos level. Uniform Malliavin estimates then convert lower-order gradient bounds into bounded joint densities of arbitrarily many first directional derivatives, and Bessel's inequality closes the induction. The argument works directly with fixed non-Gaussian laws and singular kernels, without a Gaussian-limit assumption and without a separate local-nondeterminism transfer.
\end{abstract}
\maketitle

\section{Introduction}\label{sec:introduction}

Hermite processes are canonical limits of nonlinear functionals of long-range dependent Gaussian sequences in the non-central limit theorems of Dobrushin and Major~\cite{DobrushinMajor1979} and Taqqu~\cite{Taqqu1979}. They form a family of self-similar processes with stationary increments, indexed by a self-similarity parameter $H\in(1/2,1)$ and a positive integer $q$. The first order gives fractional Brownian motion, the second gives the Rosenblatt process, and the higher orders provide non-Gaussian limits associated with larger Hermite ranks. Regularity of these limiting laws is an important part of their distributional theory: bounded densities give anti-concentration estimates, while smoothness and decay permit differentiation and Fourier analysis of the laws. The multiple-integral representation alone does not supply these properties.

For a random vector $X=(X_1,\ldots,X_n)$ with Malliavin differentiable components, its Malliavin matrix is
\[
 \Gamma_X=\bigl(\ip{DX_i}{DX_j}_{\Hh}\bigr)_{1\le i,j\le n}.
\]
Almost sure invertibility of this matrix is sufficient for absolute continuity. To obtain smooth densities by the usual nondegeneracy criterion, one proves the stronger property
\begin{equation}\label{eq:intro-inverse}
 \E\bigl[(\det\Gamma_X)^{-p}\bigr]<\infty
 \qquad\text{for every }p>0.
\end{equation}
When $X\in(\mathbb D^\infty)^n$, this condition yields a density in the Schwartz space; see Nualart~\cite[Section 2.1]{Nualart2006}. The distinction between positivity and arbitrary inverse moments is essential. Nonconstancy, even within a fixed Wiener chaos, is insufficient: for a unit vector $e$, the centered quadratic variable $W(e)^2-1$ already fails to have inverse gradient moments of order one; see Remark~\ref{rem:kernelstructure}. Thus a smoothness theorem requires a structural nondegeneracy mechanism beyond the polynomial degree.

Loosveldt, Nachit, Nourdin and Tudor~\cite{LNNTAC} proved absolute continuity of the finite-dimensional distributions of Hermite processes of arbitrary order. Their method combines a Gram-factorization identity for the Malliavin determinant with strong local nondeterminism at the level of Malliavin derivatives. In Remark~7.1 of that work, they identify inverse moments of the determinant as the additional issue required for smoothness. Their subsequent Rosenblatt paper~\cite[p.~2]{LNNTRosenblatt} explicitly describes the difficulty of establishing these moments for general orders and obtains them for $q=2$, together with Schwartz densities and exponential-type bounds for density derivatives. At order three, Moldavskaya~\cite{Moldavskaya2026} established nondegeneracy through infinite-rank quadratic Gaussian forms arising from directional derivatives. That work also proves grid-uniform Sobolev bounds for inverse Malliavin determinants of normalized increment vectors and stretched-exponential density-derivative estimates with exponent $2/3$.

The main contribution of the present paper is a uniform nondegeneracy theorem for singular weighted Hermite kernels. For the integrals $G_r(w)$ defined in Section~\ref{sec:main}, Theorem~\ref{thm:mainuniform} proves
\[
 \sup_{w\in\cK}\E\bigl[\norm{D_TG_r(w)}_{L^2(0,T)}^{-p}\bigr]<\infty,
 \qquad p>0,
\]
for every norm-compact family $\cK$ of nonzero real bounded weights, with the order and kernel parameters fixed. Equivalently, the gradient has small-ball estimates of every polynomial order, uniformly over the whole family. Theorem~\ref{thm:mainvector} then gives an exact criterion for weighted vectors: linear independence of their deterministic weights is necessary and sufficient for absolute continuity, and sufficiency yields all inverse Malliavin determinant moments and a Schwartz density. This criterion depends only on the weights and the kernel class; it does not assume independence of the random coordinates. General criteria for vectors in a fixed chaos, such as those of Nourdin, Nualart and Poly~\cite{NourdinNualartPoly2013}, decide absolute continuity but supply no inverse moments, and therefore no smoothness.

The structural ingredient is an induction hypothesis that remains valid after every directional differentiation. The identity
\[
 D_hG_r(w)=rG_{r-1}(wa_h)
\]
reduces the chaos order while multiplying the weight by a fractional transform $a_h$ of the direction. If $h$ is supported before a fixed time $\tau$, this transform is analytic and nonzero almost everywhere after $\tau$. Consequently, multiplication preserves nonvanishing on a set of positive measure in the same terminal interval. Lemmas~\ref{lem:tail} and~\ref{lem:stable} show that this property survives uniformly over spheres of directions and compact weight families. Lemma~\ref{lem:commontail} supplies a common terminal interval for every norm-compact family excluding zero. The resulting invariant class is isolated as the admissible compact families of Definition~\ref{def:admissible}, and it is this family-level assertion, rather than a statement about one unweighted law, that forms a closed induction hypothesis.

At each step, the lower-order estimates give inverse Malliavin eigenvalue moments for an arbitrarily large vector of first directional derivatives. Uniform integration by parts produces a bounded joint density, and Bessel's inequality turns this into arbitrary-order small-ball estimates for the original gradient. The induction begins with the injectivity of a right-sided fractional integral at the first chaos level. It works directly with the limiting kernels and with fixed non-Gaussian laws; neither an explicit higher-chaos characteristic function nor a Gaussian approximation is required.

\medskip
\noindent\emph{Relation to general regularity methods.}
The use of differentiation to reduce polynomial degree has substantial antecedents. In Theorem~3.1 of Bogachev~\cite{Bogachev1992}, integral polynomials of random fields have Schwartz densities under a structural condition on admissible differentiation directions. Its proof uses lower-degree derivatives and their joint distributions, together with control of parameter dependence. This is a relevant predecessor of the general proof scheme. Broader stratification and differential-operator methods for stochastic functionals are developed by Davydov, Lifshits and Smorodina~\cite{DLS1998}; the Gaussian-measure framework, including Sobolev classes and nonlinear transformations, is treated in Bogachev~\cite{Bogachev1998}. The singular kernels considered here do not admit the immediate pointwise-field substitution underlying the integral-polynomial representation: $(s-\cdot)_+^{-\beta}\notin L^2(\R)$, and the formal Gaussian variable $W((s-\cdot)_+^{-\beta})$ is not defined. Remark~\ref{rem:rankone} explains both this distinction and the additional uniform estimates that a regularization-based transfer would require. None of this precludes other representations, or applications of more general regularity criteria once their hypotheses are verified. What the present paper supplies is exactly such a verification, carried out for the entire class of singular weighted kernels in the form of Definition~\ref{def:admissible}, together with the uniformity over compact families that the induction requires and that the conclusions inherit.

A different regularization mechanism is established by Herry, Malicet and Poly~\cite[Theorem~3]{HMP2024}. For a sequence in a fixed Wiener chaos converging in law to a nondegenerate Gaussian, they obtain asymptotic bounds on negative Malliavin gradient moments of every order. Their work also provides local uniform estimates near the Gaussian law and uses induction on the chaos degree. Our theorem addresses a different hypothesis: fixed non-Gaussian weighted Hermite laws, without a Gaussian-limit assumption. The distinction is not the number of inverse moments, which is arbitrary in both settings, but the source of nondegeneracy and the family over which it is uniform. A related sphere discretization appears in~\cite[Proposition~31]{HMP2024}; we record the form needed for restricted gradients in Lemma~\ref{lem:net}.

General polynomial inequalities, such as the Carbery--Wright estimate~\cite{CarberyWright2001}, give degree-dependent small-ball bounds without this kernel-specific structure. The present work identifies and proves the stronger nondegeneracy property for the entire weighted Hermite class. Its contribution is the invariant compact-family hypothesis and its verification for these singular kernels, together with the resulting uniform inverse moments and the exact weighted-vector criterion.

\medskip
\noindent\emph{Consequences and scope.}
Indicator weights give all finite-dimensional Hermite-process distributions directly from the weighted-vector theorem, as well as vectors of non-overlapping increments. This settles the higher-order smoothness issue raised in~\cite[Remark~7.1]{LNNTAC} and~\cite[p.~2]{LNNTRosenblatt}: to the best of our knowledge, inverse-moment and Schwartz-density conclusions were not previously available for Hermite processes of order $q\ge4$. At $q=2,3$ the same argument recovers the known qualitative conclusions independently, and $q=1$ is the deterministic base step; Remark~\ref{rem:allorders} compares the orders.

General weights are not a technical generalization. By Corollary~\ref{cor:wiener}, $G_q(w)$ is, up to the normalizing constant, the Wiener integral $\int_0^Tw(s)\dd Z_s^{H,q}$ with respect to the Hermite process. Theorems~\ref{thm:mainuniform} and~\ref{thm:mainvector} therefore state that every such integral with a nonzero bounded deterministic weight is Malliavin nondegenerate, and that a finite family of them has a Schwartz density exactly when the weights are linearly independent. Compact weight families cover parametrized models: the Hermite Ornstein--Uhlenbeck weights $w_\theta(s)=e^{-\theta(t-s)}$ with $\theta$ in a compact set form an admissible family, so the density bounds hold uniformly in the parameter. Appendix~\ref{app:four} gives an alternative fourth-order proof through quadratic forms, connecting the general induction with the order-three method.

The uniform Schwartz bounds also supply a direct quantitative consequence. For each compact family $\cK$ as above, Proposition~\ref{prop:uniformvectors}, with one component, yields
\[
 \sup_{w\in\cK}\sup_{z\in\R}
 \Prob\{z<G_r(w)\le z+\varepsilon\}
 \le C_{\cK}\varepsilon,\qquad \varepsilon>0.
\]
In the same way, an $n$-dimensional family $\mathcal V$ as in Proposition~\ref{prop:uniformvectors} gives
\[
 \sup_{(w_1,\ldots,w_n)\in\mathcal V}\ \sup_{z\in\R^n}
 \Prob\{X\in B(z,\varepsilon)\}\le C_{\mathcal V}\varepsilon^n,
 \qquad \varepsilon>0,
\]
for $X=(G_r(w_1),\ldots,G_r(w_n))$, where $B(z,\varepsilon)$ is the Euclidean ball. The fixed order and kernel parameters are absorbed into the constants, which are independent of the location $z$. Estimates of this kind are the regularity input required by smoothing arguments and by Kolmogorov-distance bounds with a non-Gaussian target, where a bounded limiting density is needed; the Schwartz decay likewise licenses Fourier inversion and differentiation under the integral sign for these laws.

Uniformity concerns admissible compact families of weights at fixed order and kernel parameter; Remark~\ref{rem:scopeuniformity} describes the boundaries of this uniformity.

Section~\ref{sec:main} states the results and fixes the normalization. Section~\ref{sec:deterministic} establishes the deterministic properties of the weighted kernels. Section~\ref{sec:uniform} develops the uniform probabilistic estimates. The induction and the finite-dimensional consequences are proved in Sections~\ref{sec:induction} and~\ref{sec:vectors}, respectively.

\section{Framework and main results}\label{sec:main}

\subsection{Gaussian and Malliavin notation}
Let $W=\{W(h):h\in\Hh\}$ be an isonormal Gaussian process over the real Hilbert space $\Hh=L^2(\R)$, and let $I_r$ denote the multiple Wiener--It\^o integral of order $r$, normalized by
\[
 \E[I_r(f)I_r(g)]=r!\,\ip{f}{g}_{\Hh^{\otimes r}}
\]
for symmetric kernels $f,g$. We write $D$ for the Malliavin derivative and $\delta$ for its adjoint, the divergence operator. For a Hilbert space $E$, $k\ge0$ and $p\ge2$, we use the norm
\[
 \norm{V}_{k,p;E}
 =\left(\sum_{j=0}^k\E\norm{D^jV}_{\Hh^{\otimes j}\otimes E}^p\right)^{1/p}
\]
on $\mathbb D^{k,p}(E)$, with the scalar notation $\norm{V}_{k,p}$ when $E=\R$. Also,
\[
 \mathbb D^\infty=\bigcap_{k\ge0}\bigcap_{p\ge2}\mathbb D^{k,p}.
\]
These spaces, the multiple-integral derivative formula, and the divergence estimates used below are standard; see~\cite{Nualart2006}.

For $T>0$, identify $\HT=L^2(0,T)$ with its zero extension in $\Hh$ and define
\[
 \Dt V=\ind_{(0,T)}DV.
\]
For a vector $X\in(\mathbb D^{1,2})^n$, let
\begin{equation}\label{eq:restrictedmatrix}
 \Gamma_X^T=\bigl(\ip{\Dt X_i}{\Dt X_j}_{\HT}\bigr)_{i,j}.
\end{equation}
The full matrix dominates this restricted matrix in the positive-semidefinite order:
\begin{equation}\label{eq:matrixorder}
 \Gamma_X\succeq\Gamma_X^T,
 \qquad
 \lambda_{\min}(\Gamma_X)\ge\lambda_{\min}(\Gamma_X^T).
\end{equation}
Integration by parts below always uses the full Gaussian space and the full Malliavin matrix. The restricted derivative is used only to establish lower bounds.

\subsection{Weighted kernels}
Fix an integer $q\ge1$, $T>0$, and
\begin{equation}\label{eq:parameters}
 0<\alpha<\frac1q,\qquad
 \beta=\frac{1+\alpha}{2},\qquad
 b_\alpha=B\left(\frac{1-\alpha}{2},\alpha\right),
\end{equation}
where $B$ denotes the beta function. For a real bounded weight $w\in L^\infty(0,T)$ and $1\le r\le q$, put
\begin{equation}\label{eq:kernel}
 \begin{split}
 K_r[w](x_1,\ldots,x_r)
 &=\int_0^T w(s)\prod_{j=1}^r(s-x_j)_+^{-\beta}\dd s,\\
 G_r(w)&=I_r(K_r[w]).
 \end{split}
\end{equation}
Here $u_+^{-\beta}$ means $u^{-\beta}$ for $u>0$ and zero for $u\le0$. The kernels are understood up to Lebesgue-null sets; their square integrability is proved in Lemma~\ref{lem:kernels}. No normalizing constant is included in $G_r$.

\begin{theorem}[Uniform negative moments]\label{thm:mainuniform}
Let $\cK$ be a nonempty compact subset of $L^\infty(0,T)$ in the norm topology, consisting of real weights and not containing the zero element. For every $1\le r\le q$ and every $p>0$,
\begin{equation}\label{eq:mainuniform}
 \sup_{w\in\cK}\E\bigl[\norm{\Dt G_r(w)}_{\HT}^{-p}\bigr]<\infty.
\end{equation}
Equivalently, the following estimates hold for every $N>0$:
\begin{equation}\label{eq:mainsmallball}
 \sup_{w\in\cK}\Prob\{\norm{\Dt G_r(w)}_{\HT}\le\varepsilon\}
 \le C_N\varepsilon^N,\qquad 0<\varepsilon\le1.
\end{equation}
The constants may depend on $\cK$, $T$, $\alpha$, $r$ and the indicated exponent.
\end{theorem}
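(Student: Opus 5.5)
We argue by induction on the chaos order $r$, proving a statement stronger than \eqref{eq:mainuniform} that survives directional differentiation. The induction hypothesis at level $r$ is \eqref{eq:mainuniform} for every \emph{admissible} compact family. We take this to mean a norm-compact family $\cK$ for which there is a common terminal interval $(\tau,T)$ such that every $w\in\cK$ is nonzero on a subset of $(\tau,T)$ of positive measure, with a uniform positive lower bound. By the common-tail lemma, any norm-compact family excluding zero is admissible, so the stated theorem follows. The equivalence with \eqref{eq:mainsmallball} is routine: Markov's inequality gives one direction, and the layer-cake formula over dyadic levels gives the other.

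\emph{Base case $r=1$.} Here $\Dt G_1(w)=K_1[w]\ind_{(0,T)}$ is deterministic. Up to a constant, $K_1[w](x)=\int_x^T w(s)(s-x)^{-\beta}\dd s$ is a right-sided fractional integral of order $1-\beta$ applied to $w$. This operator is injective, so $\norm{K_1[w]}_{\HT}>0$ for $w\neq0$. The map $w\mapsto K_1[w]$ is continuous from $L^\infty$ to $L^2$ by the kernel lemma. Compactness therefore gives $\inf_{\cK}\norm{K_1[w]}_{\HT}>0$, and the negative moments are uniformly bounded.

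\emph{Inductive step.} Fix $r\ge2$ and an admissible $\cK$ with tail interval $(\tau,T)$. Choose $\tau<\tau'<T$ and an orthonormal family $h_1,\dots,h_m$ in $L^2(0,\tau')$, with $m$ large. Bessel's inequality gives
\[
 \norm{\Dt G_r(w)}_{\HT}^2\ge\sum_{i\le m}\ip{\Dt G_r(w)}{h_i}^2=r^2|Y|^2,
\]
where $Y=(G_{r-1}(wa_{h_1}),\dots,G_{r-1}(wa_{h_m}))$. It therefore suffices to show that the law of $Y$ has a density bounded uniformly in $w\in\cK$. Granting this, $\Prob\{|Y|\le\varepsilon\}\le C\varepsilon^m$, and taking $m\ge N$ yields \eqref{eq:mainsmallball}.

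To bound the density of $Y$, we apply the standard integration-by-parts formula with the full Malliavin matrix of $Y$. It then suffices to bound $\E[\lambda_{\min}(\Gamma_Y)^{-p}]$ uniformly, together with uniform $\mathbb D^{k,p}$ norms of $Y$; the latter hold by hypercontractivity and the continuity of the kernels. By \eqref{eq:matrixorder}, $\lambda_{\min}(\Gamma_Y)\ge\inf_{|c|=1}\norm{\Dt G_{r-1}(w a_{h_c})}^2$ with $h_c=\sum_i c_ih_i$, using linearity in $h$. This infimum over the sphere is handled by the net lemma: on an $\eta$-net, a union bound uses the level-$(r-1)$ hypothesis for the family $\{wa_{h_c}: w\in\cK,\ |c|=1\}$, and the Lipschitz control of $c\mapsto \Dt G_{r-1}(wa_{h_c})$ has all moments. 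Choosing $\eta=\varepsilon^{A}$ balances the number of net points against the small-ball order, which is arbitrary.

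\emph{Main obstacle.} The crux is verifying that $\{wa_{h_c}\}$ is again an admissible compact family. Compactness follows from continuity of $(w,c)\mapsto wa_{h_c}$ on $\cK\times\Sphere^{m-1}$. Admissibility requires the tail and stability lemmas. For $h$ supported in $(0,\tau')$, the transform $a_h$ is real-analytic and not identically zero on $(\tau',T)$, hence nonzero a.e. there. Multiplying by $w$, which is nonzero on a set of positive measure in the common tail, preserves this property, uniformly over the sphere, provided $\tau'$ is chosen with $w$ still nonvanishing on a positive-measure part of $(\tau',T)$ uniformly in $\cK$. I would first fix $\tau'$ via the common-tail lemma and then invoke the stability lemma for uniformity.
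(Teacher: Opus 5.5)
Your proposal is correct and follows the paper's proof essentially step for step. It uses induction over admissible compact families, with the base case from injectivity of the right-sided fractional integral. The descendant families $\{wa_{h_u}\}$ stay admissible by the analytic-tail and stability lemmas. The sphere-net lemma gives $\lambda_{\min}$ bounds, uniform integration by parts gives a bounded density of the directional-derivative vector, and Bessel's inequality closes the argument.

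The only deviation is your shift from $\tau$ to $\tau'>\tau$, which is unnecessary. Taking $h_i\in C_c^\infty(0,\tau)$ already makes $a_{h_u}$ analytic and nonzero almost everywhere on all of $(\tau,T)$, so the paper keeps one fixed $\tau$ throughout. The $h_i$ must be smooth or bounded, not merely orthonormal in $L^2$, so that the weights $wa_{h_u}$ remain in $L^\infty$.
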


\begin{theorem}[Criterion for weighted vectors]\label{thm:mainvector}
Let $w_1,\ldots,w_n\in L^\infty(0,T)$ be real and linearly independent as equivalence classes modulo equality almost everywhere. For every $1\le r\le q$, the vector
\[
 X=(G_r(w_1),\ldots,G_r(w_n))
\]
satisfies, for every $p>0$,
\begin{equation}\label{eq:mainvector}
 \E[\lambda_{\min}(\Gamma_X^T)^{-p}]<\infty,
 \qquad
 \E[(\det\Gamma_X)^{-p}]<\infty.
\end{equation}
It admits a density in $\Sch(\R^n)$. Conversely, if the weights are linearly dependent, the law of $X$ is supported by a proper linear hyperplane and has no density with respect to $n$-dimensional Lebesgue measure.
\end{theorem}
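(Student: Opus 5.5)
The plan is to derive Theorem~\ref{thm:mainvector} from Theorem~\ref{thm:mainuniform} by a compactness argument over the unit sphere of coefficients. The converse comes first because it is immediate. If $\sum_i c_iw_i=0$ a.e. with $c\neq0$, then $w\mapsto K_r[w]$ is linear, so $\sum_ic_iK_r[w_i]=0$ in $\Hh^{\otimes r}$. By linearity of $I_r$ this gives $\sum_ic_iX_i=0$ almost surely, so the law of $X$ sits on the hyperplane $c^\perp$, which is Lebesgue-null.

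For sufficiency, let $\Sphere^{n-1}$ be the unit sphere and $\cK=\{w_c=\sum_ic_iw_i:c\in\Sphere^{n-1}\}$. This set is the continuous image of a compact set in $L^\infty(0,T)$, so it is norm-compact. By linear independence it does not contain $0$. For $c\in\Sphere^{n-1}$ we have $c^\top\Gamma_X^Tc=\norm{\Dt G_r(w_c)}_{\HT}^2$, so $\lambda_{\min}(\Gamma_X^T)=\min_{c}\norm{\Dt G_r(w_c)}_{\HT}^2$. Theorem~\ref{thm:mainuniform} controls each direction uniformly, but the infimum over $c$ needs a net argument, and this is the step I expect to be the main obstacle.

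For the net step I would use Lemma~\ref{lem:net} in the following form. Take an $\varepsilon^{a}$-net $\{c_k\}$ of the sphere with $O(\varepsilon^{-a(n-1)})$ points. For any $c$ and the nearest $c_k$, the triangle inequality gives
\[
\norm{\Dt G_r(w_c)}_{\HT}\ge\norm{\Dt G_r(w_{c_k})}_{\HT}-\varepsilon^{a}\Bigl(\sum_i\norm{\Dt X_i}_{\HT}^2\Bigr)^{1/2}.
\]
Hence
\[
\Prob\{\lambda_{\min}(\Gamma^T_X)\le\varepsilon^2\}\le\sum_k\Prob\{\norm{\Dt G_r(w_{c_k})}\le2\varepsilon\}+\Prob\Bigl\{\sum_i\norm{DX_i}^2\ge\varepsilon^{2-2a}\Bigr\}.
\]
Take $a=1/2$. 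The first sum is at most $C\varepsilon^{-(n-1)/2}\varepsilon^{N}$ by \eqref{eq:mainsmallball}, and $N$ is arbitrary. The second term is at most $C_M\varepsilon^{M}$ by Markov's inequality, since $DX_i$ lies in a finite chaos and has moments of all orders by hypercontractivity. This gives small-ball estimates of every order for $\lambda_{\min}(\Gamma_X^T)$, hence the first bound in \eqref{eq:mainvector}.

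Next, \eqref{eq:matrixorder} gives $\det\Gamma_X\ge\lambda_{\min}(\Gamma_X)^n\ge\lambda_{\min}(\Gamma_X^T)^n$, which yields the second bound in \eqref{eq:mainvector}. Each $X_i$ is a finite-chaos element with square-integrable kernel by Lemma~\ref{lem:kernels}, so $X\in(\mathbb D^\infty)^n$. The standard nondegeneracy criterion \cite[Section~2.1]{Nualart2006} then gives a density in $\Sch(\R^n)$.
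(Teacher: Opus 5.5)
Your overall route is the paper's. You use the compact sphere of weights $\{\sum_i c_iw_i: c\in\Sphere^{n-1}\}$, apply Theorem~\ref{thm:mainuniform} in each direction, run a sphere-net argument (the paper simply invokes Lemma~\ref{lem:net}), compare $\det\Gamma_X\ge\lambda_{\min}(\Gamma_X^T)^n$, and finish with the nondegeneracy criterion. Your converse by linearity is also the paper's.

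The net step fails as written, because of the choice $a=1/2$. Put $L=(\sum_i\norm{\Dt X_i}_{\HT}^2)^{1/2}$. Your own display shows that moving from a near-minimizer $c$ to the net point $c_k$ costs $\varepsilon^aL$. This is at most $\varepsilon$ only when $L^2\le\varepsilon^{2-2a}$. For $a=1/2$ the exceptional event is $\{L^2\ge\varepsilon\}$, or with full gradients $\{\sum_i\norm{DX_i}_{\Hh}^2\ge\varepsilon\}$. That is a large-probability event: as $\varepsilon\downarrow0$ its probability tends to $\Prob\{L>0\}$, which equals $1$ by Theorem~\ref{thm:mainuniform}. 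Markov's inequality only gives $\varepsilon^{-M}\E L^{2M}$, which diverges. So the claimed bound $C_M\varepsilon^M$ is false, and no small-ball estimate for $\lambda_{\min}(\Gamma_X^T)$ follows. A mesh coarser than $\varepsilon$ cannot work, since the random Lipschitz constant $L$ is of order one, not small.

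The repair is to make the mesh much finer than $\varepsilon$, i.e.\ $a>1$; Lemma~\ref{lem:net} uses $a=2$. The exceptional event becomes $\{L\ge\varepsilon^{-1}\}$, with probability at most $\varepsilon^{2M}\E L^{2M}$. The net then has $O(\varepsilon^{-2(n-1)})$ points, and the union bound gives $C\varepsilon^{N-2(n-1)}$, which is harmless because $N$ is arbitrary. Equivalently, apply Lemma~\ref{lem:net} directly with a one-point index set. Hypothesis \eqref{eq:netsmallballassumption} is Theorem~\ref{thm:mainuniform} for your compact family, and \eqref{eq:netLassumption} follows from Lemma~\ref{lem:kernels}. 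With this correction the rest of your argument goes through. Using the standard single-vector nondegeneracy criterion in place of the paper's uniform Lemma~\ref{lem:density} is fine here.
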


\subsection{Hermite processes}
For an integer $q\ge1$ and $H\in(1/2,1)$, set
\begin{equation}\label{eq:hermiteparameters}
 \alpha=\frac{2(1-H)}q,\qquad
 \beta=\frac12+\frac{1-H}q,\qquad
 c_{H,q}=\left(\frac{H(2H-1)}{q!b_\alpha^q}\right)^{1/2}.
\end{equation}
The Hermite process is given, for $t\ge0$, by
\begin{equation}\label{eq:hermiteprocess}
 Z_t^{H,q}
 =c_{H,q}I_q\left(\int_0^t\prod_{j=1}^q(s-x_j)_+^{-\beta}\dd s\right).
\end{equation}
This agrees with the usual representation with $H_0=1+(H-1)/q$ and exponent $H_0-3/2$; see~\cite{Taqqu1979,LNNTAC}. Since $q\alpha=2-2H$, the energy identity \eqref{eq:energy} below gives
\[
 \E[(Z_1^{H,q})^2]
 =c_{H,q}^2\,q!\,b_\alpha^q\int_0^1\!\!\int_0^1|s-t|^{-q\alpha}\dd s\dd t
 =c_{H,q}^2\,q!\,b_\alpha^q\,\frac{2}{(2H-1)(2H)}=1,
\]
which is the normalization chosen in \eqref{eq:hermiteparameters}. With $T\ge t$,
\begin{equation}\label{eq:indicatorrepresentation}
 Z_t^{H,q}=c_{H,q}G_q(\ind_{(0,t)}).
\end{equation}

\begin{theorem}[Malliavin smoothness at every Hermite order]\label{thm:hermite}
Let $q\ge1$ be a fixed integer and $H\in(1/2,1)$. Then, for every $p>0$,
\begin{equation}\label{eq:hermiteonetime}
 \E\bigl[\norm{DZ_1^{H,q}}_{L^2(0,1)}^{-p}\bigr]<\infty;
\end{equation}
in particular $\E[\norm{DZ_1^{H,q}}_{\Hh}^{-p}]<\infty$, since $\norm{DZ_1^{H,q}}_{\Hh}\ge\norm{DZ_1^{H,q}}_{L^2(0,1)}$.
For every $n\ge1$ and every set of distinct positive times $t_1,\ldots,t_n$, the vector
\[
 \mathbf Z=(Z_{t_1}^{H,q},\ldots,Z_{t_n}^{H,q})
\]
has all negative moments of its Malliavin determinant and admits a density in $\Sch(\R^n)$. The same assertions hold for every finite vector
\[
 (Z_{b_1}^{H,q}-Z_{a_1}^{H,q},\ldots,Z_{b_n}^{H,q}-Z_{a_n}^{H,q})
\]
with $0\le a_1<b_1\le a_2<b_2\le\cdots\le a_n<b_n$.
\end{theorem}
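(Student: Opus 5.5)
Theorem~\ref{thm:hermite} is a specialization of Theorems~\ref{thm:mainuniform} and~\ref{thm:mainvector} to indicator weights, via the representation \eqref{eq:indicatorrepresentation}. I would fix $q$ and $H$, define $\alpha,\beta$ by \eqref{eq:hermiteparameters}, and note that $q\alpha=2-2H<1$, so $0<\alpha<1/q$ and \eqref{eq:parameters} holds with the same $\beta=(1+\alpha)/2$. The first step is the one-time bound \eqref{eq:hermiteonetime}. Take $T=1$, $r=q$ and $\cK=\{\ind_{(0,1)}\}$, which is compact in $L^\infty(0,1)$ and does not contain zero. Since $Z_1^{H,q}=c_{H,q}G_q(\ind_{(0,1)})$ and $D_1=\ind_{(0,1)}D$, we have $\norm{DZ_1^{H,q}}_{L^2(0,1)}=c_{H,q}\norm{D_1G_q(\ind_{(0,1)})}_{\HT}$, so \eqref{eq:mainuniform} gives \eqref{eq:hermiteonetime}. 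The bound for the full $\Hh$-norm then follows from $\norm{DZ}_{\Hh}\ge\norm{\Dt Z}_{\HT}$.

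For the finite-dimensional distributions, I would order the times as $0<t_1<\cdots<t_n$ (permuting coordinates is a linear isometry, so nothing is lost) and set $T=t_n$. The weights are $w_i=\ind_{(0,t_i)}\in L^\infty(0,T)$, and linearity of $w\mapsto K_q[w]$ gives $\mathbf Z=c_{H,q}(G_q(w_1),\dots,G_q(w_n))$. These weights are linearly independent modulo null sets. Indeed, if $\sum_i c_i\ind_{(0,t_i)}=0$ a.e., evaluating on $(t_{n-1},t_n)$ gives $c_n=0$, and descending induction gives all $c_i=0$. Theorem~\ref{thm:mainvector} then yields $\E[(\det\Gamma_X)^{-p}]<\infty$ and a density in $\Sch(\R^n)$ for $X=(G_q(w_i))_i$. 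Scaling by $c_{H,q}>0$ multiplies $\Gamma$ by $c_{H,q}^2$, hence $\det\Gamma$ by $c_{H,q}^{2n}$. It maps a Schwartz density to the Schwartz density $c_{H,q}^{-n}f(\cdot/c_{H,q})$. The conclusions therefore transfer to $\mathbf Z$.

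For the increments, I would take $T=b_n$ and $w_i=\ind_{(a_i,b_i)}=\ind_{(0,b_i)}-\ind_{(0,a_i)}$, so that $Z_{b_i}-Z_{a_i}=c_{H,q}G_q(w_i)$ by linearity. The case $a_1=0$ is fine because $Z_0=0$ and $\ind_{(0,0)}=0$. The intervals $(a_i,b_i)$ are disjoint with positive length, so the $w_i$ have disjoint supports of positive measure. Independence follows by integrating $\sum_i c_iw_i=0$ over each $(a_i,b_i)$, which gives $c_i(b_i-a_i)=0$. Applying Theorem~\ref{thm:mainvector} and the same scaling argument finishes the proof.

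There is no genuine obstacle once the two main theorems are available. The only points needing care are bookkeeping: the parameter match between \eqref{eq:hermiteparameters} and \eqref{eq:parameters}, the requirement that all weights live on a common interval $(0,T)$, and the fact that the Malliavin determinant in Theorem~\ref{thm:mainvector} is the full one, which is the one the Schwartz-density criterion needs.
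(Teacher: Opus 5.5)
Your proposal is correct and follows the paper's proof essentially step for step: the parameter check $q\alpha=2-2H<1$, the singleton family $\{\ind_{(0,1)}\}$ with $T=1$ in Theorem~\ref{thm:mainuniform}, and then Theorem~\ref{thm:mainvector} applied to two families of indicator weights, namely $\ind_{(0,t_i)}$ (independent by descending examination of the intervals) and $\ind_{(a_i,b_i)}$ (independent by disjoint supports), followed by rescaling by $c_{H,q}>0$. Your explicit treatment of the coordinate permutation, and of how the determinant and density transform under scaling, just spells out what the paper states in one line.
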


\begin{corollary}[Wiener integrals with respect to a Hermite process]\label{cor:wiener}
Let $q\ge1$, $H\in(1/2,1)$ and $T>0$. For every real $w\in L^\infty(0,T)$,
\begin{equation}\label{eq:wienerintegral}
 c_{H,q}G_q(w)=\int_0^Tw(s)\dd Z_s^{H,q},
\end{equation}
the Wiener integral with respect to the Hermite process. Consequently, every such integral with $w\ne0$ has all negative moments of its Malliavin gradient norm; a finite family of them has a Schwartz density precisely when the weights are linearly independent; and both conclusions are uniform over admissible compact families of weights, in the sense of Theorem~\ref{thm:mainuniform} and Proposition~\ref{prop:uniformvectors}.
\end{corollary}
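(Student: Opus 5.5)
The plan is to prove the identity \eqref{eq:wienerintegral} first, by the standard construction of the Wiener integral through elementary functions. The consequences will then follow by reading Theorems~\ref{thm:mainuniform} and~\ref{thm:mainvector} and Proposition~\ref{prop:uniformvectors} through this identity.

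\emph{Step embedding.} For a step function $w=\sum_k c_k\ind_{(s_k,s_{k+1}]}$ with $0=s_0<\cdots<s_m=T$, the Wiener integral is defined as $\sum_kc_k(Z^{H,q}_{s_{k+1}}-Z^{H,q}_{s_k})$. By \eqref{eq:indicatorrepresentation}, linearity of $w\mapsto K_q[w]$ in \eqref{eq:kernel}, and linearity of $I_q$, this equals $c_{H,q}G_q(w)$. So \eqref{eq:wienerintegral} holds on step functions.

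\emph{Isometry and extension.} The energy identity \eqref{eq:energy} gives
\[
\E[G_q(w)^2]=q!\,b_\alpha^q\int_0^T\!\!\int_0^T w(s)w(t)|s-t|^{-q\alpha}\dd s\dd t.
\]
Since $q\alpha=2-2H<1$, the kernel $|s-t|^{-q\alpha}$ is integrable on $(0,T)^2$. Hence $\E[G_q(w)^2]\le C\norm{w}_{L^1(0,T)}^2$ after bounding one weight by its sup norm, and certainly $\E[G_q(w)^2]\le C_T\norm{w}_{L^\infty}^2$. This is exactly the norm of the space $|\mathcal H|$ on which the Wiener integral with respect to $Z^{H,q}$ is defined, namely the closure of step functions under $H(2H-1)\iint w(s)w(t)|s-t|^{2H-2}$, and the two norms agree by the normalization of $c_{H,q}$.

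Every $w\in L^\infty(0,T)$ is an $L^2$ limit of uniformly bounded step functions $w_n$, and by dominated convergence applied to the double integral, $w_n\to w$ in this norm. Hence the Wiener integral of $w$ is the $L^2(\Omega)$ limit of $c_{H,q}G_q(w_n)$. On the other side, $K_q[w_n]\to K_q[w]$ in $\Hh^{\otimes q}$ by the same identity applied to $w_n-w$, so $G_q(w_n)\to G_q(w)$. This proves \eqref{eq:wienerintegral}. The main point to check is precisely that the energy identity is valid for general bounded weights (Lemma~\ref{lem:kernels}) and that the Wiener-integral convention matches this closure; no other obstacle is expected.

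\emph{Consequences.} For $w\neq0$, the singleton $\{w\}$ is a compact family excluding zero. Theorem~\ref{thm:mainuniform} with $r=q$, combined with $\norm{DG_q(w)}_{\Hh}\ge\norm{\Dt G_q(w)}_{\HT}$ and the constant factor $c_{H,q}$, gives all negative moments. Theorem~\ref{thm:mainvector} with $r=q$ gives the density criterion: a Schwartz density under linear independence, and support in a hyperplane otherwise. Uniformity over compact families is inherited verbatim from Theorem~\ref{thm:mainuniform} and Proposition~\ref{prop:uniformvectors}, since multiplication by $c_{H,q}$ only changes constants.
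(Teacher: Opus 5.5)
Your proposal is correct and follows the paper's proof. You verify the identity on step functions via \eqref{eq:indicatorrepresentation} and linearity, extend it using the energy identity, and then apply Theorems~\ref{thm:mainuniform} and~\ref{thm:mainvector} and Proposition~\ref{prop:uniformvectors} with $r=q$. The only difference is the extension step: you use uniformly bounded step approximations and dominated convergence, where the paper uses Schur's test to get $L^2(0,T)$-continuity of both sides.

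One harmless slip: bounding one weight by its sup norm gives $\E[G_q(w)^2]\le C\norm{w}_\infty\norm{w}_{L^1(0,T)}$, not $C\norm{w}_{L^1(0,T)}^2$. The latter fails for $w=n\ind_{(0,1/n)}$, where the energy grows like $n^{q\alpha}$. You never use that bound, and the correct one applied to $w_n-w$ already gives the convergence you need.
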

\begin{proof}
For $w=\ind_{(a,b)}$ with $0\le a<b\le T$, \eqref{eq:wienerintegral} is \eqref{eq:indicatorrepresentation} together with the linearity of $G_q$, which is the definition of the Wiener integral for step integrands ~\cite{MaejimaTudor2007}. Both sides of \eqref{eq:wienerintegral} are linear in $w$. By the energy identity \eqref{eq:energy} and Schur's test, using $q\alpha<1$ and $\sup_s\int_0^T|s-t|^{-q\alpha}\dd t<\infty$, the map $w\mapsto K_q[w]$ is bounded from $L^2(0,T)$ into $L^2(\R^q)$; with the isometry of multiple integrals, both sides are therefore continuous in $w$ for the $L^2(0,T)$ norm, and step functions are dense. The remaining assertions are Theorems~\ref{thm:mainuniform} and~\ref{thm:mainvector} applied with $r=q$, the positive factor $c_{H,q}$ being immaterial.
\end{proof}

\begin{remark}[Comparison with earlier orders]\label{rem:allorders}
The proof of Theorem~\ref{thm:hermite} given in Section~\ref{sec:vectors} follows directly from Theorems~\ref{thm:mainuniform} and~\ref{thm:mainvector}, which are based on the same induction scheme for every fixed finite chaos order. For $q=1$ the Malliavin derivatives are deterministic and the theorem is the classical nondegeneracy of fractional Brownian motion; here it is contained in the first-chaos case of Proposition~\ref{prop:induction}. For $q=2$ and $q=3$ the qualitative conclusions were obtained in~\cite{LNNTRosenblatt} and~\cite{Moldavskaya2026} by methods specific to the second and third chaos; Theorem~\ref{thm:hermite} provides an independent proof of these conclusions, but not of the quantitative density and grid-uniform estimates established there. For $q\ge4$ it gives the higher-order extension discussed in Section~\ref{sec:introduction}.
\end{remark}

Theorems~\ref{thm:mainuniform} and~\ref{thm:mainvector} also imply uniform Schwartz bounds for suitable compact families of weighted vectors; the precise version is Proposition~\ref{prop:uniformvectors}. All linear-independence assumptions in this paper concern deterministic weights. No independence of random coordinates or increments is assumed.

\section{Deterministic structure and stable weight families}\label{sec:deterministic}

\subsection{Energy and contraction identities}
\begin{lemma}[Kernel estimates]\label{lem:kernels}
Under \eqref{eq:parameters}, $K_r[w]$ is a symmetric element of $L^2(\R^r)$. For real $w,v\in L^\infty(0,T)$,
\begin{equation}\label{eq:energy}
 \ip{K_r[w]}{K_r[v]}_{L^2(\R^r)}
 =b_\alpha^r\int_0^T\int_0^T w(s)v(t)|s-t|^{-r\alpha}\dd s\dd t.
\end{equation}
In particular,
\begin{equation}\label{eq:kernelbound}
 \norm{K_r[w]}_2^2
 \le b_\alpha^r\norm w_\infty^2
 \frac{2T^{2-r\alpha}}{(1-r\alpha)(2-r\alpha)}.
\end{equation}
For every family $\mathcal W$ of real weights bounded in $L^\infty(0,T)$, every $k\ge0$ and $p\ge2$,
\begin{equation}\label{eq:uniformsobolev}
 \sup_{w\in\mathcal W}\norm{G_r(w)}_{k,p}<\infty,
\end{equation}
with a bound depending only on $\sup_{w\in\mathcal W}\norm w_\infty$, $T$, $\alpha$, $r$, $k$ and $p$.
\end{lemma}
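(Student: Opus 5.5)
The plan is to compute the inner product of the kernels by Fubini and reduce everything to a one-dimensional beta integral. For fixed $s,t\in(0,T)$ with $s\ne t$, say $t<s$, we have
\[
 \int_\R (s-x)_+^{-\beta}(t-x)_+^{-\beta}\dd x
 =\int_0^\infty u^{-\beta}(u+s-t)^{-\beta}\dd u
 =(s-t)^{1-2\beta}B(1-\beta,2\beta-1),
\]
obtained by substituting $x=t-u$ and then $u=(s-t)v$. Since $1-2\beta=-\alpha$, $1-\beta=(1-\alpha)/2$ and $2\beta-1=\alpha$, the constant is exactly $b_\alpha$. The product structure gives
\[
 \int_{\R^r}\prod_{j=1}^r(s-x_j)_+^{-\beta}(t-x_j)_+^{-\beta}\dd x=b_\alpha^r|s-t|^{-r\alpha}.
\]
Multiplying by $w(s)v(t)$ and integrating over $(0,T)^2$ yields \eqref{eq:energy}, provided Fubini is justified.

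For the justification, I would first run the computation with $|w|$ in place of $w$ and $v$. All integrands are then nonnegative, so Tonelli applies, and the result is
\[
 b_\alpha^r\norm w_\infty^2\int_0^T\!\!\int_0^T|s-t|^{-r\alpha}\dd s\dd t
 =b_\alpha^r\norm w_\infty^2\frac{2T^{2-r\alpha}}{(1-r\alpha)(2-r\alpha)},
\]
which is finite because $r\alpha\le q\alpha<1$. This shows three things at once:
\begin{itemize}
\item the defining integral of $K_r[w](x)$ converges absolutely for a.e.\ $x$;
\item $K_r[w]\in L^2(\R^r)$, with the bound \eqref{eq:kernelbound} following from $|K_r[w]|\le K_r[|w|]$;
\item the triple integral of $|w(s)v(t)|\prod_j(s-x_j)_+^{-\beta}(t-x_j)_+^{-\beta}$ is finite (by Cauchy--Schwarz, or by repeating the computation with $|v|$), so Fubini legitimately gives \eqref{eq:energy}.
\end{itemize}
Symmetry of $K_r[w]$ is immediate because the integrand is a symmetric product in $x_1,\dots,x_r$.

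For \eqref{eq:uniformsobolev}, I would use the standard derivative formula $D^jI_r(f)=\frac{r!}{(r-j)!}I_{r-j}(f(\cdot,\ast))$ for $j\le r$, with $D^j=0$ for $j>r$. The isometry gives
\[
 \E\norm{D^jG_r(w)}^2_{\Hh^{\otimes j}}=c_{r,j}\norm{K_r[w]}_2^2.
\]
Since each $D^jG_r(w)$ lies in a fixed finite chaos (it is $\Hh^{\otimes j}$-valued), hypercontractivity bounds its $L^p$ norm by $(p-1)^{(r-j)/2}$ times its $L^2$ norm. Combining this with \eqref{eq:kernelbound} gives a bound depending only on $\sup_{w\in\mathcal W}\norm w_\infty$, $T$, $\alpha$, $r$, $k$ and $p$.

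The only delicate point is the Fubini/Tonelli justification in the presence of the singular kernel $(s-\cdot)_+^{-\beta}\notin L^2$. Running the computation first with nonnegative integrands, as above, resolves it.
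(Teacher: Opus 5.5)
Your proposal is correct and follows the paper's proof essentially step by step. The shared steps are: the beta-integral identity for $\int_\R(s-x)_+^{-\beta}(t-x)_+^{-\beta}\dd x$; Tonelli with $|w|$ and $|v|$, which defines $K_r[w]$ almost everywhere and gives square integrability; Fubini for the signed weights, which yields \eqref{eq:energy}; and the derivative second-moment formula combined with hypercontractivity for Hilbert-valued chaos variables. For that last ingredient you simply cite the fact, whereas the paper justifies it briefly by expanding in an orthonormal basis and using the triangle inequality in $L^{p/2}$.

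The only slip is in wording. The Tonelli computation gives $\norm{K_r[|w|]}_2^2=b_\alpha^r\int_0^T\int_0^T|w(s)||w(t)||s-t|^{-r\alpha}\dd s\dd t$, and this is bounded by the displayed expression involving $\norm w_\infty^2$, not equal to it.
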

\begin{proof}
For $s\ne t$, a change of variables in the beta integral gives
\begin{equation}\label{eq:betaidentity}
 \int_\R(s-x)_+^{-\beta}(t-x)_+^{-\beta}\dd x
 =b_\alpha|s-t|^{-\alpha}.
\end{equation}
Apply Tonelli's theorem with the nonnegative weights $|w|$ and $|v|$. The resulting double integral is finite since $r\alpha<1$. In particular, $K_r[|w|]$ is finite almost everywhere and square integrable. Thus \eqref{eq:kernel} defines $K_r[w]$ almost everywhere, and Fubini's theorem for signed weights yields \eqref{eq:energy}. Since
\[
 \int_0^T\int_0^T|s-t|^{-r\alpha}\dd s\dd t
 =\frac{2T^{2-r\alpha}}{(1-r\alpha)(2-r\alpha)},
\]
we obtain \eqref{eq:kernelbound}.

For $0\le j\le r$, the multiple-integral derivative formula gives
\begin{equation}\label{eq:derivativesecondmoment}
 \E\norm{D^jG_r(w)}_{\Hh^{\otimes j}}^2
 =\frac{(r!)^2}{(r-j)!}\norm{K_r[w]}_2^2,
\end{equation}
while derivatives of order larger than $r$ vanish. Hypercontractivity applies also to Hilbert-valued chaos variables. For completeness, it follows from scalar hypercontractivity by expanding a Hilbert-valued variable in an orthonormal basis and using
\[
 \left\|\sum_j |V_j|^2\right\|_{L^{p/2}}
 \le\sum_j\norm{V_j}_{L^p}^2,\qquad p\ge2,
\]
first for finite sums and then by approximation. Applying this to \eqref{eq:derivativesecondmoment} proves \eqref{eq:uniformsobolev}; see~\cite[Chapter 1]{Nualart2006} for scalar hypercontractivity and the chaos calculus.
\end{proof}

\begin{remark}[Singular kernels and regularization]\label{rem:rankone}
For fixed $s$, put $\phi_s(x)=(s-x)_+^{-\beta}$. Since $2\beta=1+\alpha>1$, the singularity at $x=s$ prevents $\phi_s$ from belonging to $L^2(\R)$. Thus $W(\phi_s)$ is not a real-valued Gaussian random variable in the given isonormal construction, and \eqref{eq:kernel} is not a Bochner integral of $L^2$-valued rank-one tensors. This rules out the direct interpretation as an ordinary integral polynomial of this formal pointwise field, not every possible polynomial representation. The absolute-weight Tonelli argument in Lemma~\ref{lem:kernels} constructs the kernels without such a field.

To make the regularization issue explicit, let
\[
 \phi_s^\delta(x)=(s-x)_+^{-\beta}\ind_{\{x<s-\delta\}},
 \qquad Y_s^\delta=W(\phi_s^\delta),\qquad \delta>0.
\]
These Gaussian variables are well defined, but
\[
 \sigma_\delta^2:=\E[(Y_s^\delta)^2]
 =\int_\delta^\infty u^{-1-\alpha}\dd u
 =\frac{\delta^{-\alpha}}{\alpha}\longrightarrow\infty.
\]
For the truncated kernels $K_r^\delta[w]=\int_0^T w(s)(\phi_s^\delta)^{\otimes r}\dd s$, the rank-one integral is a legitimate Bochner integral. Moreover, $K_r^\delta[w]\to K_r[w]$ in $L^2(\R^r)$: pointwise convergence holds almost everywhere by dominated convergence in $s$, and the bound $|K_r^\delta[w]|\le K_r[|w|]\in L^2$ permits dominated convergence in the kernel variables. The isometry of multiple integrals gives $I_r(K_r^\delta[w])\to G_r(w)$ in $L^2(\Omega)$. If $H_r$ denotes the probabilists' Hermite polynomial, the regularized integral is
\[
 I_r(K_r^\delta[w])
 =\int_0^T w(s)\sigma_\delta^r
       H_r\!\left(\frac{Y_s^\delta}{\sigma_\delta}\right)\dd s
 \quad\text{in }L^2(\Omega).
\]
Consequently, an ordinary polynomial representation of the approximations has a $\delta$-dependent Gaussian field and, for $r\ge2$, variance-dependent centering terms. Smoothness of each approximation, even if established separately, would not by itself give the uniform inverse moments needed for a limiting argument. The variance divergence alone does not establish deterioration of Malliavin nondegeneracy or failure of uniform inverse-moment bounds. The proof below establishes those moments directly for the limiting singular kernels.
\end{remark}

For real $h\in C_c^\infty(0,T)$, extended by zero to $\R$, define
\begin{equation}\label{eq:ah}
 a_h(s)=\int_0^s h(t)(s-t)^{-\beta}\dd t,\qquad 0<s<T.
\end{equation}
The elementary bound
\begin{equation}\label{eq:ahbound}
 \norm{a_h}_\infty
 \le\frac{T^{1-\beta}}{1-\beta}\norm h_\infty
\end{equation}
will be used repeatedly.

\begin{lemma}[Directional differentiation]\label{lem:directional}
For $2\le r\le q$, real $w\in L^\infty(0,T)$ and $h\in C_c^\infty(0,T)$,
\begin{equation}\label{eq:directional}
 D_hG_r(w):=\ip{\Dt G_r(w)}h_{\HT}
 =rG_{r-1}(wa_h).
\end{equation}
Since $h$ vanishes outside $(0,T)$, the left-hand side of \eqref{eq:directional} coincides with the full-space directional derivative $\ip{DG_r(w)}{h}_{\Hh}$. At the first chaos level,
\begin{equation}\label{eq:firstderivative}
 (\Dt G_1(w))(x)=\int_x^T w(s)(s-x)^{-\beta}\dd s,
 \qquad 0<x<T.
\end{equation}
\end{lemma}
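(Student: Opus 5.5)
The plan is to start from the multiple-integral derivative formula: for a symmetric kernel $f\in L^2(\R^r)$,
\[
 D_xI_r(f)=rI_{r-1}(f(\cdot,x)).
\]
Pairing with $h$ and using the stochastic Fubini theorem for multiple integrals (that is, $\ip{DI_r(f)}{h}_{\Hh}=rI_{r-1}(f\otimes_1h)$, where $f\otimes_1h$ is the one-fold contraction) gives $D_hG_r(w)=rI_{r-1}(K_r[w]\otimes_1h)$. So everything reduces to the deterministic identity
\[
 \int_\R K_r[w](x_1,\ldots,x_{r-1},x)h(x)\dd x
 =K_{r-1}[wa_h](x_1,\ldots,x_{r-1})
\]
almost everywhere. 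Since $h$ is supported in $(0,T)$, the restricted and full pairings agree trivially, which gives the remark after \eqref{eq:directional}.

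For the deterministic identity, I would insert the definition \eqref{eq:kernel} and exchange the $x$- and $s$-integrations:
\[
 \int_\R h(x)\int_0^T w(s)\prod_{j<r}(s-x_j)_+^{-\beta}(s-x)_+^{-\beta}\dd s\dd x
 =\int_0^T w(s)\prod_{j<r}(s-x_j)_+^{-\beta}\Bigl(\int_0^s h(x)(s-x)^{-\beta}\dd x\Bigr)\dd s .
\]
The inner integral is exactly $a_h(s)$ from \eqref{eq:ah}. Justifying the exchange is the main point. Tonelli applied with $|w|$ and $|h|$ bounds the absolute integrand by $\norm h_\infty$ times
\[
 \int_0^T|w(s)|\prod_{j<r}(s-x_j)_+^{-\beta}\Bigl(\int_0^s(s-x)^{-\beta}\dd x\Bigr)\dd s
 \le\norm h_\infty\frac{T^{1-\beta}}{1-\beta}\,K_{r-1}[|w|](x_1,\ldots,x_{r-1}),
\]
using $\beta<1$. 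By Lemma~\ref{lem:kernels}, $K_{r-1}[|w|]$ is finite almost everywhere, since $wa_h$ is bounded by \eqref{eq:ahbound} and $(r-1)\alpha<1$. So Fubini applies for almost every $(x_1,\ldots,x_{r-1})$.

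It remains to confirm that the pointwise contraction coincides with the $L^2$ contraction $K_r[w]\otimes_1h$. This holds because $h\in L^1\cap L^2$ and the integral defining the contraction converges absolutely almost everywhere, by the bound above. I would also note that $K_{r-1}[wa_h]$ is symmetric, so $I_{r-1}$ applies directly, and this gives $rG_{r-1}(wa_h)$.

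For the first-chaos formula \eqref{eq:firstderivative}, note that $G_1(w)=W(K_1[w])$, so $DG_1(w)=K_1[w]$ is deterministic. Restricting to $(0,T)$ gives
\[
 K_1[w](x)=\int_0^Tw(s)(s-x)_+^{-\beta}\dd s=\int_x^Tw(s)(s-x)^{-\beta}\dd s
\]
for $0<x<T$. The only real obstacle in the whole proof is the Fubini justification in the directional identity, and it is handled by the absolute-weight Tonelli bound together with \eqref{eq:ahbound}.
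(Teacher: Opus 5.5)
Your proposal is correct and takes essentially the same route as the paper. You reduce via $D_hI_r(f)=rI_{r-1}(f\otimes_1h)$ to identifying the contraction $K_r[w]\otimes_1h$ with $K_{r-1}[wa_h]$, justify the exchange of integrals by Tonelli with $|w|,|h|$ followed by Fubini, and read the first-chaos formula off $DI_1(K_1[w])=K_1[w]$. There are two cosmetic slips: your displayed Tonelli bound carries the factor $\norm h_\infty$ twice, and finiteness of $K_{r-1}[|w|]$ follows from boundedness of $|w|$, not of $wa_h$.
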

\begin{proof}
The identity $D_hI_r(f)=rI_{r-1}(f\otimes_1h)$ reduces the assertion to the contraction of $K_r[w]$ with $h$. That contraction is square integrable and has norm at most $\norm{K_r[w]}_2\norm h_2$. Applying Tonelli with $|w|$ and $|h|$, and then Fubini for the signed integrand, identifies it with $K_{r-1}[wa_h]$. This proves \eqref{eq:directional}. For $r=1$, $DI_1(K_1[w])=K_1[w]$, and restriction to $(0,T)$ gives \eqref{eq:firstderivative}.
\end{proof}

\subsection{Analyticity after the support of a direction}
Fix $0<\tau<T$ throughout this subsection.

\begin{lemma}[Analytic-tail nonvanishing]\label{lem:tail}
If $0\ne h\in C_c^\infty(0,\tau)$ is real, the function
\[
 a_h(s)=\int_0^\tau h(t)(s-t)^{-\beta}\dd t,\qquad s>\tau,
\]
is real analytic and not identically zero. In particular, its zero set in $(\tau,T)$ has Lebesgue measure zero.
\end{lemma}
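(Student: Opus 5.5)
Two things must be shown: that $a_h$ is real analytic on $(\tau,\infty)$, and that it does not vanish identically there. The measure-zero statement then follows because a real-analytic function on an interval that is not identically zero has only isolated zeros.

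\emph{Analyticity.} For $s>\tau$ and $t\in\operatorname{supp}h\subset(0,\tau)$ we have $s-t>s-\tau>0$, so the integrand is smooth. For complex $z$ with $\operatorname{Re}z>\tau$, define
\[
 A(z)=\int_0^\tau h(t)(z-t)^{-\beta}\dd t,
\]
using the principal branch. Since $z-t$ lies in the right half-plane, the integrand is holomorphic in $z$ and locally uniformly bounded. By Morera's theorem or differentiation under the integral sign, $A$ is holomorphic on $\{\operatorname{Re}z>\tau\}$. It restricts to $a_h$ on $(\tau,\infty)$, so $a_h$ is real analytic there, and in fact on all of $(\tau,\infty)$, not just on $(\tau,T)$.

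\emph{Nonvanishing.} Suppose instead that $a_h\equiv0$ on $(\tau,T)$. By analytic continuation, $A\equiv0$ on the whole half-plane, and in particular $a_h(s)=0$ for all $s>\tau$. I will derive a contradiction from the large-$s$ expansion. For $s>2\tau$, expand
\[
 (s-t)^{-\beta}=s^{-\beta}(1-t/s)^{-\beta}=s^{-\beta}\sum_{k\ge0}\frac{(\beta)_k}{k!}\Bigl(\frac ts\Bigr)^k .
\]
This series converges uniformly for $t\in[0,\tau]$, so
\[
 a_h(s)=\sum_{k\ge0}\frac{(\beta)_k}{k!}\,m_k\,s^{-\beta-k},
 \qquad m_k=\int_0^\tau t^kh(t)\dd t .
\]
The Pochhammer coefficients $(\beta)_k$ are nonzero because $\beta>0$. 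The function $s^{\beta}a_h(s)$ is a convergent power series in $1/s$, so vanishing for large $s$ forces every coefficient to be zero, that is, $m_k=0$ for all $k\ge0$. Hence $h$ is orthogonal to all polynomials on $[0,\tau]$. By the Weierstrass approximation theorem, $\int_0^\tau h^2\dd t=0$, so $h=0$, which is a contradiction.

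An alternative route is the injectivity of the Riemann–Liouville operator. One would view $a_h$ on all of $(0,T)$ as the fractional integral $\Gamma(1-\beta)I^{1-\beta}_{0+}h$ and use that $a_h\equiv0$ on $(\tau,\infty)$ conflicts with its moment structure. The moment argument above is the cleaner choice.

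The main point needing care is justifying the termwise expansion and the passage from vanishing on $(\tau,T)$ to vanishing for large $s$. The analytic continuation handles the latter, because $(\tau,T)$ has accumulation points inside the half-plane of holomorphy. The uniform convergence of the binomial series on $t\in[0,\tau]$ for $s>2\tau$ handles the former, since it permits exchanging the sum and the integral.
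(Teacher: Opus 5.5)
Your proof is correct and follows essentially the same route as the paper: a holomorphic extension to $\{\Re z>\tau\}$, the identity theorem to pass from vanishing on $(\tau,T)$ to vanishing for all large $s$, the binomial expansion of $s^\beta a_h(s)$ as a power series in $1/s$ with coefficients $(\beta)_k m_k/k!$, and the vanishing of all polynomial moments forcing $h=0$. The only differences are cosmetic: you restrict to $s>2\tau$, and you use Weierstrass approximation where the paper uses $L^2$-density of polynomials.
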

\begin{proof}
For $t\in[0,\tau]$ the function $z\mapsto(z-t)^{-\beta}$, defined with the principal branch, is holomorphic on the half-plane $\{\Re z>\tau\}$, and for $z$ in a compact subset of this half-plane it is bounded uniformly in $t$. Differentiation under the integral sign (or Morera's theorem together with Fubini's theorem) therefore shows that $a_h$ extends to a holomorphic function on $\{\Re z>\tau\}$; in particular $a_h$ is real analytic on $(\tau,\infty)$. If $a_h$ vanished on an open subinterval of $(\tau,\infty)$, the identity theorem on the connected set $(\tau,\infty)$ would imply $a_h(s)=0$ for every $s>\tau$.

For such $s$, the binomial expansion is uniformly convergent for $t\in[0,\tau]$ and gives
\begin{equation}\label{eq:momentexpansion}
 s^\beta a_h(s)
 =\sum_{n=0}^\infty\frac{(\beta)_n}{n!s^n}
       \int_0^\tau t^nh(t)\dd t,
\end{equation}
where $(\beta)_n=\beta(\beta+1)\cdots(\beta+n-1)$ and $(\beta)_0=1$. In the variable $z=1/s$, the right-hand side is a power series for $|z|<1/\tau$. Its vanishing for $0<z<1/\tau$ forces all coefficients to be zero. Since $(\beta)_n\ne0$, every polynomial moment of $h$ vanishes. Density of polynomials in $L^2(0,\tau)$ gives $h=0$, a contradiction. The zeros of a nonzero real-analytic function on an interval are isolated and hence form a null set.
\end{proof}

\begin{definition}[Admissible compact family]\label{def:admissible}
For fixed $T$ and $\tau$, a nonempty family $\cK$ of real weights is admissible if it is compact in $L^\infty(0,T)$ and
\begin{equation}\label{eq:admissible}
 \int_\tau^T|w(s)|^2\dd s>0\qquad\text{for every }w\in\cK.
\end{equation}
\end{definition}
The weights themselves need not be analytic, continuous, or nonzero almost everywhere. Condition~\eqref{eq:admissible} only asks for nonvanishing on a set of positive measure after the common time $\tau$.

\begin{lemma}[Stability under directional differentiation]\label{lem:stable}
Let $\cK$ be admissible and let $h_1,\ldots,h_m\in C_c^\infty(0,\tau)$ be real and orthonormal in $L^2(0,T)$. For $u\in\Sphere^{m-1}$ put $h_u=\sum_i u_ih_i$. Then
\begin{equation}\label{eq:descendants}
 \cK^{[h]}=\{wa_{h_u}:w\in\cK,\ u\in\Sphere^{m-1}\}
\end{equation}
is admissible for the same $T$ and $\tau$.
\end{lemma}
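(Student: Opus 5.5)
We must verify two properties of $\cK^{[h]}$: compactness in $L^\infty(0,T)$, and the tail nonvanishing condition \eqref{eq:admissible} for each of its elements.

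\emph{Compactness.} We will show that $\cK^{[h]}$ is the image of the compact set $\cK\times\Sphere^{m-1}$ under a continuous map. Consider
\[
\Phi\colon L^\infty(0,T)\times\R^m\to L^\infty(0,T),\qquad \Phi(w,u)=w\,a_{h_u}.
\]
The map $u\mapsto a_{h_u}=\sum_iu_ia_{h_i}$ is linear. By \eqref{eq:ahbound} each $a_{h_i}$ is bounded, so $u\mapsto a_{h_u}$ is continuous into $L^\infty(0,T)$, with
\[
\norm{a_{h_u}-a_{h_{u'}}}_\infty\le \frac{T^{1-\beta}}{1-\beta}\sum_i|u_i-u_i'|\,\norm{h_i}_\infty.
\]
Multiplication $L^\infty\times L^\infty\to L^\infty$ is jointly continuous, since
\[
\norm{wa-w'a'}_\infty\le\norm{w-w'}_\infty\norm a_\infty+\norm{w'}_\infty\norm{a-a'}_\infty .
\]
Hence $\Phi$ is continuous. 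The product $\cK\times\Sphere^{m-1}$ is compact, so its image $\cK^{[h]}$ is compact. It is nonempty because $\cK$ is nonempty and $m\ge1$. The functions $a_{h_u}$ are real because the $h_i$ are real, so all elements of $\cK^{[h]}$ are real weights.

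\emph{Nonvanishing after $\tau$.} Fix $w\in\cK$ and $u\in\Sphere^{m-1}$. Orthonormality of the $h_i$ gives $\norm{h_u}_2=|u|=1$, so $h_u\ne0$. Moreover $h_u\in C_c^\infty(0,\tau)$, being a finite linear combination of such functions. For $s>\tau$, definition \eqref{eq:ah} reduces to the integral over $(0,\tau)$, because $h_u$ vanishes on $[\tau,s)$.

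Lemma~\ref{lem:tail} therefore applies: the zero set
\[
Z_u=\{s\in(\tau,T):a_{h_u}(s)=0\}
\]
is Lebesgue-null. By \eqref{eq:admissible}, the set
\[
P_w=\{s\in(\tau,T):w(s)\ne0\}
\]
has positive measure. Then $P_w\setminus Z_u$ also has positive measure, and $w\,a_{h_u}\ne0$ on it. This gives
\[
\int_\tau^T|w\,a_{h_u}|^2\dd s>0.
\]

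\emph{Where the difficulty lies.} The only nontrivial input is the pointwise-in-$u$ nonvanishing, which Lemma~\ref{lem:tail} already supplies; no uniform quantitative lower bound over $u$ is required by Definition~\ref{def:admissible}. Had a quantitative bound been needed, one would obtain it from compactness and continuity of $(w,u)\mapsto\int_\tau^T|wa_{h_u}|^2$. That step is not needed here, so the argument closes directly.
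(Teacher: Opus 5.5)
Your proof is correct and follows the same route as the paper's. Compactness comes from $\cK^{[h]}$ being the continuous image of $\cK\times\Sphere^{m-1}$ under $(w,u)\mapsto wa_{h_u}$, using linearity of $h\mapsto a_h$ and \eqref{eq:ahbound}; the tail condition comes from $h_u\ne0$ (by orthonormality) together with Lemma~\ref{lem:tail}, so multiplication by $a_{h_u}$ cannot annihilate the positive-measure set on which $w\ne0$.
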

\begin{proof}
The map $(w,u)\mapsto wa_{h_u}$ is continuous from $\cK\times\Sphere^{m-1}$ into $L^\infty(0,T)$: by linearity of $h\mapsto a_h$ and \eqref{eq:ahbound},
\[
 \norm{wa_{h_u}-w'a_{h_{u'}}}_\infty
 \le\norm{w-w'}_\infty\norm{a_{h_u}}_\infty
   +\norm{w'}_\infty\frac{T^{1-\beta}}{1-\beta}\,|u-u'|\Bigl(\sum_{i=1}^m\norm{h_i}_\infty^2\Bigr)^{1/2}.
\]
Its domain is compact, so its image is compact. Orthonormality implies $h_u\ne0$ for every unit $u$. Lemma~\ref{lem:tail} gives $a_{h_u}\ne0$ almost everywhere on $(\tau,T)$. Multiplication by $a_{h_u}$ cannot annihilate the positive-measure set on which $w$ is nonzero. Thus every member of \eqref{eq:descendants} satisfies \eqref{eq:admissible}.
\end{proof}

\begin{lemma}[A common terminal interval]\label{lem:commontail}
Every nonempty norm-compact family $\cK\subset L^\infty(0,T)$ not containing zero is admissible for some $\tau\in(0,T)$.
\end{lemma}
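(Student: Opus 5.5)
We will argue by contradiction, combining compactness with monotone convergence of the tail integrals. For $\tau\in(0,T)$ define
\[
 \Phi_\tau(w)=\int_\tau^T|w(s)|^2\dd s,\qquad w\in L^\infty(0,T).
\]
The aim is a single $\tau$ such that $\Phi_\tau(w)>0$ for every $w\in\cK$. Compactness in $L^\infty(0,T)$ is already part of the hypothesis, so only condition~\eqref{eq:admissible} has to be checked.

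First we record the relevant continuity. Each $\Phi_\tau$ is continuous on $L^\infty(0,T)$, because
\[
 |\Phi_\tau(w)^{1/2}-\Phi_\tau(v)^{1/2}|\le\norm{w-v}_{L^2(\tau,T)}\le T^{1/2}\norm{w-v}_\infty.
\]
Moreover, $\tau\mapsto\Phi_\tau(w)$ is nonincreasing, and by monotone convergence $\Phi_\tau(w)\uparrow\norm w_{L^2(0,T)}^2$ as $\tau\downarrow0$. The limit is strictly positive for every $w\in\cK$, since $0\notin\cK$ and a nonzero class in $L^\infty(0,T)$ is nonzero on a set of positive measure.

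The compactness step runs as follows. Suppose that no $\tau$ works. Then for $\tau_n=T/(n+1)\downarrow0$ there are $w_n\in\cK$ with $\Phi_{\tau_n}(w_n)=0$. By compactness, a subsequence converges in $L^\infty$ to some $w\in\cK$. Fix $m$. For every $n\ge m$ we have $\tau_n\le\tau_m$, so monotonicity gives $\Phi_{\tau_m}(w_n)\le\Phi_{\tau_n}(w_n)=0$. Continuity of $\Phi_{\tau_m}$ then yields $\Phi_{\tau_m}(w)=0$. Letting $m\to\infty$ and using monotone convergence gives $\norm w_{L^2(0,T)}=0$, so $w=0$ almost everywhere, which contradicts $0\notin\cK$.

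An equivalent route is an open cover. The sets $U_\tau=\{w:\Phi_\tau(w)>0\}$ are open, increase as $\tau$ decreases, and cover $\cK$. A finite subcover therefore reduces to the single smallest $\tau$. No step here is a real obstacle. The only points needing care are that $\tau$ must lie strictly inside $(0,T)$, which the choice $\tau_n=T/(n+1)$ guarantees, and that the monotonicity in $\tau$ is used in the correct direction.
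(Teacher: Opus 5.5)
Your proof is correct, but it takes a softer route than the paper's. The paper argues by direct estimate. Compactness, together with continuity of the embedding $L^\infty(0,T)\hookrightarrow L^2(0,T)$, gives $d=\inf_{w\in\cK}\norm w_2>0$ and $M=\sup_{w\in\cK}\norm w_\infty<\infty$. The head of the integral is then controlled uniformly, $\int_0^\tau|w(s)|^2\dd s\le\tau M^2$. Hence any $\tau<\min\{T,d^2/(2M^2)\}$ works, and it even yields the uniform bound $\int_\tau^T|w(s)|^2\dd s>d^2/2$ on $\cK$.

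Your argument is Dini-type. The continuous functionals $\Phi_\tau$ increase pointwise to the strictly positive limit $\norm w_2^2$ as $\tau\downarrow0$. Compactness, used either through subsequences or through a finite subcover of the nested open sets $U_\tau$, then turns positivity at each $w$ into positivity for one common $\tau$.

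The paper's version buys an explicit $\tau$ in terms of $d$ and $M$, together with a quantitative lower bound. Yours never uses the uniform sup-norm bound, only continuity of each $\Phi_\tau$ in the topology in which $\cK$ is compact. It would therefore apply verbatim to families that are merely compact in $L^2(0,T)$. A uniform positive lower bound can still be recovered afterwards in your setting as $\min_{w\in\cK}\Phi_\tau(w)>0$, though not explicitly. Since admissibility only requires positivity for each $w$, either argument suffices.
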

\begin{proof}
Continuity of the embedding into $L^2(0,T)$ and compactness give
\[
 d:=\inf_{w\in\cK}\norm w_2>0,
 \qquad M:=\sup_{w\in\cK}\norm w_\infty<\infty.
\]
Choose $0<\tau<\min\{T,d^2/(2M^2)\}$. Then
\[
 \int_\tau^T|w(s)|^2\dd s
 \ge d^2-\tau M^2>d^2/2
 \qquad\text{for every }w\in\cK.
\]
\end{proof}

\section{Uniform Malliavin estimates}\label{sec:uniform}

We isolate two probabilistic facts with their uniformity made explicit. The first is a compactness estimate on a finite-dimensional sphere. The second is a uniform version of the usual Malliavin density argument. In both statements the index set may be arbitrary; no joint measurability or continuity in the index is needed. Suprema over indices remain outside probabilities and expectations.

\subsection{From individual directions to the smallest eigenvalue}
The following estimate uses the standard discretization of a finite-dimensional sphere. A related discretization appears in Herry, Malicet and Poly~\cite[Proposition~31]{HMP2024}. We state and prove the restricted-gradient version with uniformity over an arbitrary index family.
\begin{lemma}[Sphere-net estimate]\label{lem:net}
Let $Y_\theta=(Y_{\theta,1},\ldots,Y_{\theta,m})\in(\mathbb D^{1,2})^m$, $\theta\in\Theta$. Suppose that, for every $N>0$,
\begin{equation}\label{eq:netsmallballassumption}
 \sup_{\theta\in\Theta,\ u\in\Sphere^{m-1}}
 \Prob\{\norm{\Dt(u^\top Y_\theta)}_{\HT}\le\varepsilon\}
 \le C_N\varepsilon^N,\qquad 0<\varepsilon\le1,
\end{equation}
and that
\begin{equation}\label{eq:netLassumption}
 L_\theta:=\left(\sum_{i=1}^m\norm{\Dt Y_{\theta,i}}_{\HT}^2\right)^{1/2}
 \quad\text{satisfies}\quad
 \sup_\theta\E L_\theta^p<\infty\quad\text{for all }p>0.
\end{equation}
Then for every $p>0$,
\begin{equation}\label{eq:neteigenvalue}
 \sup_\theta\E[\lambda_{\min}(\Gamma_{Y_\theta}^T)^{-p}]<\infty.
\end{equation}
The same assertion holds for the full Malliavin matrices.
\end{lemma}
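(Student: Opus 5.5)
The plan is to reduce \eqref{eq:neteigenvalue} to a uniform small-ball estimate for $\lambda_{\min}:=\lambda_{\min}(\Gamma_{Y_\theta}^T)$. Since $\E[\lambda^{-p}]=\int_0^\infty p\,t^{-p-1}\Prob\{\lambda\le t\}\,\mathrm dt$ over $t$ up to $1$ plus $1$, it suffices to show that for every $N>0$
\[
 \sup_\theta\Prob\{\lambda_{\min}(\Gamma_{Y_\theta}^T)\le\varepsilon^2\}\le C'_N\varepsilon^N,\qquad 0<\varepsilon\le1 .
\]
The two ingredients are the variational formula and the Lipschitz dependence of the quadratic form on $u$. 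By linearity of $\Dt$, we have $u^\top\Gamma_{Y_\theta}^Tu=\norm{\Dt(u^\top Y_\theta)}_{\HT}^2$, and $\lambda_{\min}=\min_{u\in\Sphere^{m-1}}\norm{\Dt(u^\top Y_\theta)}_{\HT}^2$. For $u,v$ on the sphere, the triangle inequality and Cauchy--Schwarz give
\[
 \bigl|\norm{\Dt(u^\top Y_\theta)}_{\HT}-\norm{\Dt(v^\top Y_\theta)}_{\HT}\bigr|\le\norm{\Dt((u-v)^\top Y_\theta)}_{\HT}\le|u-v|\,L_\theta .
\]

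Next I discretize. Fix $\varepsilon$ and a parameter $\gamma\in(0,1)$ to be chosen. I take an $\varepsilon^{1-\gamma}$-net $\mathcal N_\varepsilon$ of $\Sphere^{m-1}$ with cardinality at most $C_m\varepsilon^{-(1-\gamma)(m-1)}$; this is independent of $\theta$. On the event $\{\lambda_{\min}\le\varepsilon^2\}$, I pick a (random) minimizer $u^*$ and a net point $v$ with $|u^*-v|\le\varepsilon^{1-\gamma}$. Then $\norm{\Dt(v^\top Y_\theta)}_{\HT}\le\varepsilon+\varepsilon^{1-\gamma}L_\theta$. Splitting according to $L_\theta\le\varepsilon^{-\gamma/2}$ or not yields
\[
 \Prob\{\lambda_{\min}\le\varepsilon^2\}\le\sum_{v\in\mathcal N_\varepsilon}\Prob\{\norm{\Dt(v^\top Y_\theta)}_{\HT}\le2\varepsilon^{1-3\gamma/2}\}+\Prob\{L_\theta>\varepsilon^{-\gamma/2}\}.
\]
For the first term, \eqref{eq:netsmallballassumption} with exponent $N'$ bounds it by $C\varepsilon^{-(m-1)}\cdot C_{N'}\varepsilon^{(1-3\gamma/2)N'}$. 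For the second, Markov's inequality with \eqref{eq:netLassumption} gives $\sup_\theta\E L_\theta^{p'}\,\varepsilon^{\gamma p'/2}$. Fixing, say, $\gamma=1/2$ and choosing $N'$ and $p'$ large makes both terms $O(\varepsilon^N)$ uniformly in $\theta$, because every constant enters only through the suprema in the hypotheses. The only point requiring care is that the net argument is pathwise, which makes the use of a random minimizer harmless: we only bound the event by a union over the deterministic net.

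For the full Malliavin matrices, I would apply \eqref{eq:matrixorder}: $\lambda_{\min}(\Gamma_{Y_\theta})\ge\lambda_{\min}(\Gamma^T_{Y_\theta})$, so the negative moments transfer immediately. (Alternatively, one can rerun the argument with $\Hh$ in place of $\HT$, with the hypotheses read for the full gradient.)

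I expect no serious obstacle. The main bookkeeping issue is choosing the net scale against the growth of $L_\theta$ so that the polynomial cardinality of the net is beaten by the arbitrary polynomial decay in \eqref{eq:netsmallballassumption}, and checking measurability of $\lambda_{\min}$, which is a continuous function of the matrix entries.
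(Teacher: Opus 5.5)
Your proposal is correct and follows essentially the paper's argument: a deterministic net of polynomial size on the sphere, the pathwise Lipschitz bound in $u$ with constant $L_\theta$, a union bound over the net combined with Markov's inequality for $L_\theta$, the layer-cake formula, and \eqref{eq:matrixorder} for the full matrices. The differences are only the choice of scales (the paper uses an $\varepsilon^2$-net and the cutoff $L_\theta\le\varepsilon^{-1}$) and one small detail you should add: \eqref{eq:netsmallballassumption} is stated only for levels at most $1$, so restrict to $\varepsilon\le 1/16$, where $2\varepsilon^{1/4}\le1$, and treat larger $\varepsilon$ trivially, as the paper does for $\varepsilon>1/2$.
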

\begin{proof}
For each fixed $\theta$, choose Hilbert-valued representatives of the finitely many gradients and use their linear combinations for every $u$. Then
\[
 X_\theta(u):=\norm{\Dt(u^\top Y_\theta)}_{\HT}
 \quad\text{satisfies}\quad
 |X_\theta(u)-X_\theta(v)|\le L_\theta|u-v|.
\]
In particular, for every $\omega$ the function $u\mapsto X_\theta(u)(\omega)$ is continuous on the compact sphere and attains its minimum, and $X_{\theta,*}=\min_{|u|=1}X_\theta(u)$ is measurable; the infimum may equivalently be taken over a fixed countable dense subset of the sphere.

For $0<\varepsilon\le1/2$, take a deterministic $\varepsilon^2$-net $\mathcal N_\varepsilon$ of $\Sphere^{m-1}$ with at most $C_m\varepsilon^{-2m}$ points. Such a net is obtained, for example, by taking a maximal $\varepsilon^2$-separated subset and comparing the volumes of the disjoint balls of radius $\varepsilon^2/2$ centred at its points with the volume of the ball of radius $1+\varepsilon^2/2$ in $\R^m$. On the event
\[
 \{X_{\theta,*}\le\varepsilon,\ L_\theta\le\varepsilon^{-1}\},
\]
choose a minimizer $u^*$ and $v\in\mathcal N_\varepsilon$ with $|u^*-v|\le\varepsilon^2$; then $X_\theta(v)\le X_\theta(u^*)+L_\theta\varepsilon^2\le2\varepsilon$. A union bound over the net, assumption \eqref{eq:netsmallballassumption} at the level $2\varepsilon\le1$, and Markov's inequality for $L_\theta$ therefore give, for arbitrary $A,B>0$,
\begin{equation}\label{eq:netestimate}
 \sup_\theta\Prob\{X_{\theta,*}\le\varepsilon\}
 \le C_{m,A}\varepsilon^{A-2m}
       +\varepsilon^B\sup_\theta\E L_\theta^B,
 \qquad0<\varepsilon\le\tfrac12,
\end{equation}
where $C_{m,A}=2^AC_mC_A$. For any desired exponent $N$, choose $A>N+2m$ and $B>N$; for $1/2<\varepsilon\le1$ the bound $\Prob\{X_{\theta,*}\le\varepsilon\}\le1\le2^N\varepsilon^N$ is trivial. This proves small-ball estimates of every order for $X_{\theta,*}$, uniformly in $\theta$.

For a positive random variable $X$ and $s>0$, the elementary bound
\begin{equation}\label{eq:layercake}
 \E X^{-s}\le1+s\int_0^1 t^{-s-1}\Prob\{X<t\}\dd t
\end{equation}
shows that these estimates imply all inverse moments. Since
\[
 \lambda_{\min}(\Gamma_{Y_\theta}^T)=X_{\theta,*}^2,
\]
we use an exponent larger than $2p$ in \eqref{eq:netestimate} to obtain \eqref{eq:neteigenvalue}. The full-matrix assertion follows from \eqref{eq:matrixorder}. No independence among the net events is used.
\end{proof}

\subsection{Uniform inverse matrices and integration-by-parts weights}
\begin{lemma}[Uniform Schwartz-density bounds]\label{lem:density}
Let $Y_\theta\in(\mathbb D^\infty)^m$, $\theta\in\Theta$, and suppose that, for all $k\ge0$, $p\ge2$ and $1\le i\le m$,
\begin{equation}\label{eq:densitysobolevassumption}
 \sup_\theta\norm{Y_{\theta,i}}_{k,p}<\infty.
\end{equation}
Suppose also that, for every $p>0$,
\begin{equation}\label{eq:densityinverseassumption}
 \sup_\theta\E[\lambda_{\min}(\Gamma_{Y_\theta})^{-p}]<\infty.
\end{equation}
Then $Y_\theta$ has a density $p_\theta\in\Sch(\R^m)$, and for every pair of multi-indices $\mu,\nu\in\N_0^m$,
\begin{equation}\label{eq:uniformSchwartz}
 \sup_{\theta\in\Theta}\sup_{x\in\R^m}
 |x^\mu\partial^\nu p_\theta(x)|<\infty.
\end{equation}
In particular, $\sup_\theta\norm{p_\theta}_\infty<\infty$.
\end{lemma}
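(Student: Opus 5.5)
The plan is to run the standard Malliavin integration-by-parts argument (Nualart, Proposition 2.1.4 and Section 2.1.5), keeping track of every constant so that it depends on $Y_\theta$ only through quantities controlled by \eqref{eq:densitysobolevassumption} and \eqref{eq:densityinverseassumption}.

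First, set $\gamma_\theta=\Gamma_{Y_\theta}^{-1}$ on the full Gaussian space. It exists almost surely by \eqref{eq:densityinverseassumption}. Its operator norm is $\lambda_{\min}(\Gamma_{Y_\theta})^{-1}$, so every entry satisfies $\sup_\theta\E|\gamma_\theta^{ij}|^p<\infty$ for all $p$. Next I will show $\gamma_\theta^{ij}\in\mathbb D^\infty$ with uniform Sobolev norms. For this, approximate by $(\Gamma_{Y_\theta}+\epsilon I)^{-1}$ and use the formula $D\gamma=-\gamma(D\Gamma)\gamma$ together with its iterates. Each $D^k\gamma_\theta^{ij}$ is then a finite sum of products of entries of $\gamma_\theta$, at most $k+1$ of them, with derivatives of entries of $\Gamma_{Y_\theta}$. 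Those derivatives are polynomial in $D^lY_{\theta,i}$. Hölder's inequality then bounds $\sup_\theta\norm{\gamma_\theta^{ij}}_{k,p}$ using \eqref{eq:densitysobolevassumption} and the inverse moments. Passing to the limit $\epsilon\to0$ is justified because these bounds are uniform in $\epsilon$ and $\mathbb D^{k,p}$ is closed under such bounded limits; closability gives this directly.

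Second, define the iterated weights recursively. Put $H_{()}(G)=G$ and
\[
 H_{(i)}(G)=\sum_j\delta\bigl(G\gamma_\theta^{ij}DY_{\theta,j}\bigr),
\]
and iterate, $H_{(i_1,\dots,i_l)}=H_{(i_l)}(H_{(i_1,\dots,i_{l-1})})$. For $\varphi\in C_c^\infty$ this gives the identity $\E[\partial_{i_1\cdots i_l}\varphi(Y_\theta)G]=\E[\varphi(Y_\theta)H_{(i_1,\dots,i_l)}(G)]$. Meyer's inequality $\norm{\delta(u)}_{k,p}\le c_{k,p}\norm{u}_{k+1,p;\Hh}$, together with Hölder's inequality for products in $\mathbb D^{k,p}$, shows that $\sup_\theta\norm{H_\alpha(G)}_{k,p}\le C\norm{G}_{k',p'}$. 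The constant is uniform in $\theta$ and depends only on the suprema above. I will take $G$ to be monomials $Y_\theta^\mu$. Their Sobolev norms are bounded uniformly in $\theta$ by \eqref{eq:densitysobolevassumption} and Hölder's inequality.

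Third, obtain the density representation. With $(1,\dots,1)$ differentiations one gets
\[
 p_\theta(x)=\E\bigl[\ind_{\{Y_\theta>x\}}H_{(1,\dots,m)}(1)\bigr],
\]
and more generally $\partial^\nu p_\theta(x)=(-1)^{|\nu|}\E[\ind_{\{Y_\theta>x\}}H_{(1,\dots,m)+\nu}(1)]$, with the inequality $Y_\theta>x$ taken componentwise. This shows $p_\theta\in C^\infty$ and gives $\sup_\theta\norm{\partial^\nu p_\theta}_\infty<\infty$. For the polynomial weight, the plan is to bound $|x^\mu|$ coordinatewise, splitting into orthants so that the indicator can be chosen as $\ind_{\{Y_{\theta,i}>x_i\}}$ or $\ind_{\{Y_{\theta,i}<x_i\}}$ according to the sign of $x_i$. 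On the relevant event $|x_i|\le|Y_{\theta,i}|$ holds. Hence
\[
 |x^\mu\partial^\nu p_\theta(x)|\le\E\bigl[|Y_\theta^\mu|\,|H_{(1,\dots,m)+\nu}(1)|\bigr],
\]
which is uniformly bounded by Cauchy–Schwarz. Taking $\mu=\nu=0$ gives $\sup_\theta\norm{p_\theta}_\infty<\infty$.

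The main obstacle is the first step. One must show rigorously that $\gamma_\theta^{ij}\in\mathbb D^\infty$ with Sobolev norms bounded uniformly in $\theta$, using only inverse moments of $\lambda_{\min}$ rather than of $\det\Gamma$. Everything else is the standard estimate with constants tracked. I would settle it through the $\epsilon$-regularization described above, using $\norm{(\Gamma+\epsilon I)^{-1}}\le\lambda_{\min}(\Gamma)^{-1}$ uniformly in $\epsilon$.
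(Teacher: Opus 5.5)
Your proposal is correct. Your first two steps follow the paper's Steps~1--2: the regularized inverse $(\Gamma_{Y_\theta}+\epsilon I)^{-1}$ bounded by $\lambda_{\min}(\Gamma_{Y_\theta})^{-1}$, the iterated formula $D\gamma=-\gamma(D\Gamma)\gamma$, the limit $\epsilon\to0$, and Meyer's inequality for the iterated weights.

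The routes diverge only in the last step. The paper never integrates by parts against indicators. It applies the identity to $e^{it^\top y}$ to get $|\chi_\theta(t)|\le C|t|^{-k}$ uniformly in $\theta$. It then uses the weights $V=Y_\theta^\mu$ to represent and bound $\partial_t^\mu\chi_\theta$, so that $\{\chi_\theta\}$ is a bounded subset of $\Sch(\R^m)$. Continuity of Fourier inversion on $\Sch$ transfers this to $\{p_\theta\}$.

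You instead use the direct representation $\partial^\nu p_\theta(x)=(-1)^{|\nu|}\E[\ind_{\{Y_\theta>x\}}H_{(1,\dots,m)+\nu}(1)]$ with orthant-dependent indicators. This gives the explicit bound $\sup_x|x^\mu\partial^\nu p_\theta(x)|\le\norm{Y_\theta^\mu}_{L^2}\norm{H_{(1,\dots,m)+\nu}(1)}_{L^2}$ and needs only the weight $G=1$. The monomial weights you announce in Step~2 are therefore never used.

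Your route is more elementary and gives concrete constants. The paper's route needs the identity only for smooth bounded test functions, which avoids two points you should make explicit:
\begin{itemize}
\item the indicator formula, justified through $\varphi(y)=\int_{-\infty}^{y_1}\!\cdots\int_{-\infty}^{y_m}f$ with $f\in C_c^\infty$, and its $\int_{y_i}^{\infty}$ variants for the orthants;
\item the deduction $p_\theta\in C^\infty$ from boundedness of all distributional derivatives.
\end{itemize}

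There is one small omission in your Step~1. For fixed $\epsilon$, membership of $(\Gamma_{Y_\theta}+\epsilon I)^{-1}$ in $\mathbb D^\infty$ does not follow directly from the chain rule, because matrix inversion is not a globally smooth map with bounded derivatives. The paper writes this inverse as $\adj(\Gamma+\eta I)\,f_\eta(\det(\Gamma+\eta I))$, where $f_\eta$ is a smooth modification of $t^{-1}$ below $\eta^m/2$, using $\det(\Gamma+\eta I)\ge\eta^m$. Your limiting step, via uniform $\mathbb D^{k,p}$ bounds and weak compactness for $1<p<\infty$, is fine.
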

\begin{proof}
The lemma formulates standard Malliavin integration by parts with the parameter control needed to propagate the induction over compact families. We give the details of the uniform estimates, since separate smoothness of the laws would not imply the bound required there.

\emph{Step 1: inverse-matrix Sobolev estimates.}
Write $\Gamma_\theta=\Gamma_{Y_\theta}$ and $\lambda_\theta=\lambda_{\min}(\Gamma_\theta)$. By \eqref{eq:densityinverseassumption}, $\lambda_\theta>0$ almost surely. For $0<\eta\le1$, set
\[
 A_{\theta,\eta}=(\Gamma_\theta+\eta I_m)^{-1}.
\]
The entries of $\Gamma_\theta$ have all Sobolev norms uniformly bounded, by the product rule and \eqref{eq:densitysobolevassumption}. Moreover,
\[
 \norm{A_{\theta,\eta}}_{\mathrm{op}}\le\lambda_\theta^{-1}.
\]
For fixed $\eta$, membership of $A_{\theta,\eta}$ in $\mathbb D^\infty$ follows without any assumption that $\Gamma_\theta$ is bounded away from singularity. Indeed, $\det(\Gamma_\theta+\eta I_m)\ge\eta^m$ almost surely. Choose a smooth function $f_\eta$ on $\R$, with bounded derivatives of every order, equal to $t^{-1}$ for $t\ge\eta^m/2$. Then
\[
 A_{\theta,\eta}
 =\adj(\Gamma_\theta+\eta I_m)
       f_\eta\bigl(\det(\Gamma_\theta+\eta I_m)\bigr).
\]
The entries of $\adj(\Gamma_\theta+\eta I_m)$ and $\det(\Gamma_\theta+\eta I_m)$ are polynomials in the entries of $\Gamma_\theta$, hence belong to $\mathbb D^\infty$ by the Leibniz rule and H\"older's inequality; $f_\eta(\det(\Gamma_\theta+\eta I_m))\in\mathbb D^\infty$ by repeated application of the chain rule~\cite[Proposition 1.2.3]{Nualart2006}, together with the Leibniz rule and H\"older's inequality; and the product is again in $\mathbb D^\infty$. Differentiating the identity $A_{\theta,\eta}(\Gamma_\theta+\eta I_m)=I_m$ by the Leibniz rule gives
\begin{equation}\label{eq:inverseD}
 DA_{\theta,\eta}
 =-A_{\theta,\eta}(D\Gamma_\theta)A_{\theta,\eta}.
\end{equation}
This is the uniform counterpart of the argument in~\cite[Lemma 2.1.6]{Nualart2006}.
Repeated differentiation produces finite sums of products of inverse matrices and derivatives of $\Gamma_\theta$. In compatible Hilbert tensor norms, for $k\ge1$ this yields
\begin{equation}\label{eq:inverseDbound}
 \norm{D^kA_{\theta,\eta}}
 \le C_{k,m}\sum_{\ell=1}^k\lambda_\theta^{-\ell-1}
 \sum_{\substack{j_1+\cdots+j_\ell=k\\j_1,\ldots,j_\ell\ge1}}
 \prod_{a=1}^{\ell}\norm{D^{j_a}\Gamma_\theta}.
\end{equation}
H\"older's inequality and the hypotheses show that every term on the right has every finite moment, uniformly in $\theta$ and $\eta$.

For fixed $\theta$, $A_{\theta,\eta}$ converges almost surely to $A_\theta=\Gamma_\theta^{-1}$. Each differentiated product also converges almost surely. The bounds just obtained give domination in each $L^p$, so the regularized matrices and all their derivatives converge in the corresponding $L^p$ spaces. Closedness of the Malliavin derivative gives $A_\theta\in\mathbb D^{k,p}$ entrywise and
\begin{equation}\label{eq:inverseuniformsob}
 \sup_\theta\norm{(A_\theta)_{ij}}_{k,p}<\infty
 \qquad\text{for all }k\ge0,\ p\ge2.
\end{equation}
This also justifies \eqref{eq:inverseD} and its iterates for the unregularized inverse.

\emph{Step 2: integration-by-parts weights.}
For a scalar $V\in\mathbb D^\infty$, define
\begin{equation}\label{eq:IBPweight}
 H_i(Y_\theta,V)
 =\delta\left(V\sum_{j=1}^m(A_\theta)_{ij}DY_{\theta,j}\right).
\end{equation}
We use the standard divergence Sobolev estimate
\begin{equation}\label{eq:meyer}
 \norm{\delta U}_{k,p}
 \le C_{k,p}\norm U_{k+1,p;\Hh},\qquad k\ge0,\ p\ge2,
\end{equation}
which follows from Meyer's inequalities; see~\cite[Proposition 1.5.7]{Nualart2006}. Combining \eqref{eq:meyer} with the product rule and H\"older's inequality gives
\begin{equation}\label{eq:IBPexplicitbound}
 \norm{H_i(Y_\theta,V)}_{k,p}
 \le C_{k,p,m}\norm V_{k+1,3p}
 \sum_{j=1}^m\norm{(A_\theta)_{ij}}_{k+1,3p}
                      \norm{Y_{\theta,j}}_{k+2,3p}.
\end{equation}
Thus a family of weights $V_\theta$ with all Sobolev norms uniformly bounded produces the same kind of family after one application of $H_i$. The property is preserved by any finite number of iterations.

For a sequence of indices $\mathbf i=(i_1,\ldots,i_k)$, define the iterated weight by successively applying $H_{i_1},\ldots,H_{i_k}$, starting from $V$. By duality between $D$ and $\delta$ and by $A_\theta\Gamma_\theta=I_m$, we have
\begin{equation}\label{eq:IBPidentity}
 \E[(\partial_{i_1}\cdots\partial_{i_k}\varphi)(Y_\theta)V]
 =\E[\varphi(Y_\theta)H_{\mathbf i}(Y_\theta,V)]
\end{equation}
for $\varphi\in C_b^\infty(\R^m)$. For one differentiation, this follows directly by pairing $D\varphi(Y_\theta)$ with the vector inside the divergence in \eqref{eq:IBPweight}; iteration proves the general formula. Complex exponentials are handled by their real and imaginary parts. Formula~\eqref{eq:IBPidentity} is the usual Malliavin integration-by-parts identity~\cite[Proposition 2.1.4]{Nualart2006}; estimate~\eqref{eq:IBPexplicitbound} records the uniformity needed here.

\emph{Step 3: Fourier estimates.}
Let $\chi_\theta(t)=\E e^{it^\top Y_\theta}$. If $|t|\ge1$, choose an index $i$ with $|t_i|\ge|t|/\sqrt m$. Since $\partial_i^k e^{it^\top y}=(it_i)^ke^{it^\top y}$, identity \eqref{eq:IBPidentity} with $\mathbf i=(i,\ldots,i)$ of length $k$, $V=1$, and $\varphi$ equal to the real and imaginary parts of $e^{it^\top y}$ gives
\[
 |t_i|^k|\chi_\theta(t)|
 \le2\,\E|H_{i,\ldots,i}(Y_\theta,1)|,
 \qquad\text{hence}\qquad
 |\chi_\theta(t)|\le2m^{k/2}|t|^{-k}\sup_{\theta}\E|H_{i,\ldots,i}(Y_\theta,1)|.
\]
The supremum is finite by Steps 1 and 2. Taking $k>m$ and using $|\chi_\theta|\le1$ on $\{|t|\le1\}$ proves a uniform $L^1(\R^m)$ bound on $\chi_\theta$, so Fourier inversion gives a bounded continuous density with $\sup_\theta\norm{p_\theta}_\infty\le(2\pi)^{-m}\sup_\theta\norm{\chi_\theta}_{L^1}$.

To obtain all Schwartz bounds, use $V_\theta=Y_\theta^\mu$. Polynomial weights have all Sobolev norms uniformly bounded by \eqref{eq:densitysobolevassumption}. Differentiation under expectation is justified by the corresponding moments, and
\[
 \partial_t^\mu\chi_\theta(t)
 =i^{|\mu|}\E[Y_\theta^\mu e^{it^\top Y_\theta}].
\]
The preceding integration-by-parts argument yields, for every $N\ge0$ and every $\mu$,
\begin{equation}\label{eq:CFSchwartz}
 \sup_\theta\sup_{t\in\R^m}
 (1+|t|)^N|\partial_t^\mu\chi_\theta(t)|<\infty.
\end{equation}
On $|t|\le1$ the bound follows from the moments of $Y_\theta$. Thus the characteristic functions form a bounded set in $\Sch(\R^m)$. Continuity of Fourier inversion on the Schwartz space proves \eqref{eq:uniformSchwartz}.
\end{proof}

\begin{remark}\label{rem:fullIBP}
Lemma~\ref{lem:net} may be applied to restricted gradients, whereas Lemma~\ref{lem:density} uses the full Malliavin inverse matrix. The comparison \eqref{eq:matrixorder} connects the two statements. No conditional or partial Malliavin integration-by-parts formula is being assumed.
\end{remark}

\section{Induction over the chaos order}\label{sec:induction}

The induction is carried by the admissible-family condition of Definition~\ref{def:admissible}: norm compactness and nonvanishing on a common terminal interval. Lemmas~\ref{lem:tail} and~\ref{lem:stable} preserve this condition under every family of directional contractions needed below. The assertion at the preceding level is therefore available for the entire descendant family, including all directions on the sphere. This is the structural closure that allows the probabilistic estimates of Section~\ref{sec:uniform} to be iterated.

\begin{proposition}[Induction on admissible families]\label{prop:induction}
Fix $q,T,\alpha,\beta$ as in \eqref{eq:parameters} and fix $0<\tau<T$. For every admissible family $\cK$ and every $1\le r\le q$, the estimates \eqref{eq:mainuniform} and \eqref{eq:mainsmallball} hold.
\end{proposition}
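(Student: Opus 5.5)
We argue by induction on $r$, the assertion at level $r$ being that \eqref{eq:mainsmallball} holds for \emph{every} admissible family $\cK$ (for the fixed $T,\tau$). By the layer-cake bound \eqref{eq:layercake}, small-ball estimates of every order are equivalent to \eqref{eq:mainuniform}, so we prove \eqref{eq:mainsmallball}.

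\emph{Base case $r=1$.} By \eqref{eq:firstderivative}, $\Dt G_1(w)$ is the deterministic function $x\mapsto\int_x^Tw(s)(s-x)^{-\beta}\dd s$, a right-sided Riemann--Liouville fractional integral of $w$. This operator is injective on $L^2(0,T)$: composing with a further fractional integral of order $\beta$ gives a multiple of the ordinary integral $\int_x^Tw$, which vanishes identically only if $w=0$. Admissibility forces $w\neq0$, so the norm is positive. The map $w\mapsto\Dt G_1(w)$ is bounded $L^\infty\to L^2(0,T)$ by Schur's test, so the norm is continuous on the compact set $\cK$ and has a positive minimum $c$. Then $\Prob\{\norm{\Dt G_1(w)}\le\varepsilon\}=0$ for $\varepsilon<c$, which gives \eqref{eq:mainsmallball} trivially.

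\emph{Inductive step $r-1\Rightarrow r$, $r\ge2$.} Fix an admissible $\cK$ and an integer $m$, to be chosen large depending on $N$. Choose real functions $h_1,\dots,h_m\in C_c^\infty(0,\tau)$ that are orthonormal in $L^2(0,T)$. By Bessel's inequality,
\[
\norm{\Dt G_r(w)}_{\HT}^2\ \ge\ \sum_{i=1}^m\bigl(D_{h_i}G_r(w)\bigr)^2=|Y_w|^2,
\qquad Y_w=\bigl(rG_{r-1}(wa_{h_i})\bigr)_{i\le m},
\]
where we used Lemma~\ref{lem:directional}. For $u\in\Sphere^{m-1}$, linearity gives $u^\top Y_w=rG_{r-1}(wa_{h_u})$. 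Lemma~\ref{lem:stable} shows that $\cK^{[h]}$ is admissible for the same $\tau$, so the induction hypothesis yields \eqref{eq:netsmallballassumption} uniformly over $\theta=w\in\cK$ and $u$. The moment bound \eqref{eq:netLassumption} follows from \eqref{eq:uniformsobolev}, since $\cK^{[h]}$ is bounded in $L^\infty$. Lemma~\ref{lem:net} then gives uniform inverse moments of $\lambda_{\min}(\Gamma^T_{Y_w})$, hence by \eqref{eq:matrixorder} of the full matrix. Together with \eqref{eq:uniformsobolev}, Lemma~\ref{lem:density} produces densities $p_w$ of $Y_w$ with $\sup_{w\in\cK}\norm{p_w}_\infty=:C_m<\infty$. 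Therefore
\[
\Prob\{\norm{\Dt G_r(w)}\le\varepsilon\}\le\Prob\{|Y_w|\le\varepsilon\}\le C_m\,\omega_m\varepsilon^m,
\]
where $\omega_m$ is the volume of the unit ball in $\R^m$. Taking $m\ge N$ gives \eqref{eq:mainsmallball} with $C_N=C_m\omega_m$.

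The main obstacle is the closure of the induction. The hypothesis at level $r-1$ must apply to the whole descendant family over all directions $u$, and uniformly. This is exactly why the statement is formulated for all admissible families with a fixed $\tau$, and why the directions are supported in $(0,\tau)$ so that Lemma~\ref{lem:tail} applies. The remaining checks are routine: the smoothness of the $h_i$ needed in Lemma~\ref{lem:directional}, and the fact that the constants from Lemmas~\ref{lem:net} and~\ref{lem:density} depend only on the family and on $m$.
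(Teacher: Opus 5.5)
Your proposal is correct and follows the paper's proof essentially step for step. The base case uses injectivity of the right-sided fractional integral plus compactness, and the inductive step uses orthonormal directions in $C_c^\infty(0,\tau)$, Lemma~\ref{lem:stable} to apply the level-$(r-1)$ hypothesis to the whole descendant family, Lemmas~\ref{lem:net} and~\ref{lem:density} for a uniformly bounded density of $Y_w$, and Bessel's inequality to conclude. The only difference is cosmetic: you get continuity of $w\mapsto\Dt G_1(w)$ from Schur's test, whereas the paper uses the pointwise bound $\norm w_\infty(T-x)^{1-\beta}/(1-\beta)$, and either works.
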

\begin{proof}
We prove the assertions at level $r$ for \emph{every} admissible family. Throughout the induction $\alpha$, $\beta$, $T$ and $\tau$ remain fixed.

\emph{Base case $r=1$.}
By \eqref{eq:firstderivative}, the restricted derivative is the deterministic function
\[
 (\Dt G_1(w))(x)=\int_x^T w(s)(s-x)^{-\beta}\dd s.
\]
For $\gamma>0$, define the right-sided fractional integral
\[
 (I_{T-}^\gamma w)(x)
 =\frac1{\Gamma(\gamma)}\int_x^T(s-x)^{\gamma-1}w(s)\dd s.
\]
Then $\Dt G_1(w)=\Gamma(1-\beta)I_{T-}^{1-\beta}w$. For $w\in L^1(0,T)$, Fubini's theorem and the beta integral give, for almost every $x\in(0,T)$,
\begin{align*}
 (I_{T-}^{\beta}I_{T-}^{1-\beta}w)(x)
 &=\frac{1}{\Gamma(\beta)\Gamma(1-\beta)}
   \int_x^T w(s)\left[
     \int_x^s(t-x)^{\beta-1}(s-t)^{-\beta}\dd t
   \right]\dd s\\
 &=\int_x^T w(s)\dd s=(I_{T-}^1w)(x).
\end{align*}
Tonelli's theorem applied to $|w|$ justifies the interchange. If $\Dt G_1(w)=0$ almost everywhere, applying $I_{T-}^{\beta}$ and using this identity gives $I_{T-}^1w=0$.
Thus $\int_x^T w(s)\dd s=0$ for almost every $x$, hence for every $x$ by continuity. This integral is absolutely continuous in $x$, so $w=0$ almost everywhere, contrary to admissibility.

The map $w\mapsto\Dt G_1(w)$ is continuous from $L^\infty(0,T)$ to $L^2(0,T)$, since
\[
 |(\Dt G_1(w))(x)|
 \le\frac{\norm w_\infty}{1-\beta}(T-x)^{1-\beta}.
\]
Compactness therefore yields
\begin{equation}\label{eq:basepositive}
 \delta_{\cK}:=\inf_{w\in\cK}\norm{\Dt G_1(w)}_{\HT}>0.
\end{equation}
Since $\Dt G_1(w)$ is deterministic, $\Prob\{\norm{\Dt G_1(w)}_{\HT}\le\varepsilon\}$ equals $0$ for $\varepsilon<\delta_{\cK}$ and is at most $1\le(\varepsilon/\delta_{\cK})^N$ for $\varepsilon\ge\delta_{\cK}$, so \eqref{eq:mainsmallball} holds with $C_N=\delta_{\cK}^{-N}$, and \eqref{eq:mainuniform} holds with $\sup_{w\in\cK}\E[\norm{\Dt G_1(w)}_{\HT}^{-p}]\le\delta_{\cK}^{-p}$. This proves both assertions at level one.

\emph{Induction step.}
Assume that both assertions hold at level $r-1$ for every admissible family. Let $\cK$ be an arbitrary admissible family at level $r$, and choose an arbitrary integer $m\ge1$. There exist real orthonormal functions
\[
 h_1,\ldots,h_m\in C_c^\infty(0,\tau);
\]
for example, one may normalize smooth nonzero functions with disjoint supports. Define
\begin{equation}\label{eq:directionalvector}
 Y_w=(D_{h_1}G_r(w),\ldots,D_{h_m}G_r(w)).
\end{equation}
For $u\in\Sphere^{m-1}$, Lemma~\ref{lem:directional} gives
\begin{equation}\label{eq:inductioncombination}
 u^\top Y_w=rG_{r-1}(wa_{h_u}).
\end{equation}
By Lemma~\ref{lem:stable}, the family of all weights $wa_{h_u}$, with $w\in\cK$ and $u$ on the sphere, is admissible for the same $T$ and $\tau$. The induction hypothesis \eqref{eq:mainsmallball} at level $r-1$, applied to this entire family, implies, for every $N>0$,
\begin{equation}\label{eq:inductionuniformdirections}
 \begin{split}
 \sup_{w\in\cK,\ |u|=1}
 \Prob\{\norm{\Dt(u^\top Y_w)}_{\HT}\le\varepsilon\}
 &=\sup_{w\in\cK,\ |u|=1}
 \Prob\{\norm{\Dt G_{r-1}(wa_{h_u})}_{\HT}\le\varepsilon/r\}\\
 &\le C_{m,N}\varepsilon^N,
 \qquad 0<\varepsilon\le1,
 \end{split}
\end{equation}
with $C_{m,N}=r^{-N}C_N$, where $C_N$ is the constant of the level-$(r-1)$ estimate for the family $\cK^{[h]}$; this constant depends on $m$ through $\cK^{[h]}$.

By Lemma~\ref{lem:kernels} and \eqref{eq:ahbound}, the weights $wa_{h_i}$ are bounded in $L^\infty(0,T)$ uniformly in $w\in\cK$ and $1\le i\le m$, so the coordinates $D_{h_i}G_r(w)=rG_{r-1}(wa_{h_i})$ of $Y_w$ have all Sobolev norms uniformly bounded. In particular, the random Lipschitz constants $L_w$ in Lemma~\ref{lem:net} have every positive moment uniformly in $w$. Lemma~\ref{lem:net}, applied with $\Theta=\cK$ and with hypothesis \eqref{eq:netsmallballassumption} supplied by \eqref{eq:inductionuniformdirections}, gives, first for the restricted matrices and then by \eqref{eq:matrixorder} for the full ones,
\[
 \sup_{w\in\cK}\E[\lambda_{\min}(\Gamma_{Y_w})^{-p}]<\infty
 \qquad\text{for every }p>0.
\]
Lemma~\ref{lem:density} now supplies joint densities $p_w$ of $Y_w$ with
\begin{equation}\label{eq:directionaldensitybound}
 \sup_{w\in\cK}\norm{p_w}_\infty\le C_m<\infty.
\end{equation}

Bessel's inequality is applied only to the first directional derivatives in \eqref{eq:directionalvector}:
\[
 |Y_w|^2=\sum_{i=1}^m|D_{h_i}G_r(w)|^2
 \le\norm{\Dt G_r(w)}_{\HT}^2.
\]
Hence $\{\norm{\Dt G_r(w)}_{\HT}\le\varepsilon\}\subset\{|Y_w|\le\varepsilon\}$. If $\omega_m$ denotes the volume of the Euclidean unit ball in $\R^m$, \eqref{eq:directionaldensitybound} implies
\begin{equation}\label{eq:finalbessel}
 \sup_{w\in\cK}\Prob\{\norm{\Dt G_r(w)}_{\HT}\le\varepsilon\}
 \le\sup_{w\in\cK}\int_{\{|y|\le\varepsilon\}}p_w(y)\dd y
 \le C_m\omega_m\varepsilon^m.
\end{equation}
Since $m$ was arbitrary, this proves \eqref{eq:mainsmallball}. For a prescribed $p>0$, choose $m>p$ in \eqref{eq:finalbessel} and apply \eqref{eq:layercake}; this gives \eqref{eq:mainuniform}. The induction is complete.
\end{proof}

\begin{proof}[Proof of Theorem~\ref{thm:mainuniform}]
By Lemma~\ref{lem:commontail}, the given compact family is admissible for some $\tau\in(0,T)$. Proposition~\ref{prop:induction} then applies. Conversely, the existence of all uniform inverse moments implies all the small-ball bounds by Markov's inequality, while \eqref{eq:layercake} gives the reverse implication with a sufficiently large small-ball exponent.
\end{proof}

\begin{remark}[The parameter and the induction hypothesis]\label{rem:fixedparameter}
A variable obtained after differentiation is generally not a standard Hermite random variable with the original self-similarity parameter $H$. It is a weighted multiple integral with the same exponent $\beta$. The condition $r\alpha<1$ holds at every intermediate level because $r\le q$ and $q\alpha<1$. The induction hypothesis is therefore formulated for these weighted kernels with fixed $\alpha$, not for the unweighted Hermite law at order $r$.
\end{remark}

\begin{remark}[Absence of a derivative comparison]
At level $r$, only scalar gradient estimates at level $r-1$ are assumed. They imply nondegeneracy of the vector of first derivatives at level $r$ through \eqref{eq:inductioncombination}. A density bound for that vector is then proved and used in \eqref{eq:finalbessel}. There is no circular use of level-$r$ nondegeneracy and no deterministic inequality that bounds a first derivative below by a higher derivative.
\end{remark}

\begin{remark}[Why compactness matters]\label{rem:compactness}
Uniform boundedness of the weights, even together with a uniform lower $L^2$ bound, is not a substitute for norm compactness. For example, take $T=2\pi$ and $w_n(s)=\sin(ns)$ at level one. These weights are bounded in $L^\infty$ and have constant $L^2$ norm. For every $x\in(0,T)$, the Riemann--Lebesgue lemma gives
\[
 \int_x^T\sin(ns)(s-x)^{-\beta}\dd s\longrightarrow0.
\]
The bound $(T-x)^{1-\beta}/(1-\beta)$ permits dominated convergence in $L^2(0,T)$. Hence $\norm{\Dt G_1(w_n)}_{\HT}\to0$, and uniform inverse-moment bounds fail. The compact families in the proof are finite-dimensional continuous images or explicitly assumed compact sets, not merely bounded collections of weights.
\end{remark}

\section{Weighted vectors and Hermite distributions}\label{sec:vectors}

\begin{proof}[Proof of Theorem~\ref{thm:mainvector}]
Consider
\begin{equation}\label{eq:weightsphere}
 \cK_X=\left\{\sum_{i=1}^n u_iw_i:u\in\Sphere^{n-1}\right\}.
\end{equation}
This is a compact subset of $L^\infty(0,T)$, and linear independence implies that it does not contain zero. By Theorem~\ref{thm:mainuniform}, for every $N>0$,
\[
 \sup_{|u|=1}\Prob\{\norm{\Dt(u^\top X)}_{\HT}\le\varepsilon\}
 \le C_N\varepsilon^N.
\]
The coordinates of $X$ have all Sobolev norms by Lemma~\ref{lem:kernels}. Lemma~\ref{lem:net} therefore gives all inverse moments of $\lambda_{\min}(\Gamma_X^T)$. Since
\begin{equation}\label{eq:detcomparison}
 \det\Gamma_X
 \ge\lambda_{\min}(\Gamma_X)^n
 \ge\lambda_{\min}(\Gamma_X^T)^n,
\end{equation}
all inverse determinant moments follow. Lemma~\ref{lem:density} gives a Schwartz density. If the weights are linearly dependent, some nonzero $u\in\R^n$ has $\sum_i u_iw_i=0$ almost everywhere. Linearity of \eqref{eq:kernel} gives $u^\top X=0$ almost surely, proving the converse.
\end{proof}

\begin{proposition}[Compact families of weighted vectors]\label{prop:uniformvectors}
Let $\mathcal V\subset(L^\infty(0,T))^n$ be compact in the product norm topology, and suppose that every tuple $(w_1,\ldots,w_n)\in\mathcal V$ is linearly independent. For fixed $1\le r\le q$, the corresponding vectors have uniformly bounded inverse moments of their restricted smallest Malliavin eigenvalues and of their full Malliavin determinants. Their densities form a bounded subset of $\Sch(\R^n)$.
\end{proposition}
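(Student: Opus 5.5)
The plan is to repeat the proof of Theorem~\ref{thm:mainvector} with the index set enlarged from a single tuple to all of $\mathcal V$, and to use Lemmas~\ref{lem:net} and~\ref{lem:density} with $\Theta=\mathcal V$. Uniformity will come from one compact scalar family to which Theorem~\ref{thm:mainuniform} applies.

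We first build that family:
\[
 \cK_{\mathcal V}=\Bigl\{\textstyle\sum_{i=1}^n u_iw_i:\ u\in\Sphere^{n-1},\ (w_1,\ldots,w_n)\in\mathcal V\Bigr\}.
\]
It is the image of the compact set $\Sphere^{n-1}\times\mathcal V$ under the map $(u,w)\mapsto\sum_iu_iw_i$. This map is continuous into $L^\infty(0,T)$, since
\[
 \norm{\textstyle\sum_i u_iw_i-\sum_iu'_iw'_i}_\infty\le|u-u'|\max_i\norm{w_i}_\infty\sqrt n+\sqrt n\max_i\norm{w_i-w'_i}_\infty ,
\]
so $\cK_{\mathcal V}$ is norm compact. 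Linear independence of every tuple in $\mathcal V$ shows that $0\notin\cK_{\mathcal V}$. Theorem~\ref{thm:mainuniform}, with $\cK=\cK_{\mathcal V}$, then gives
\[
 \sup_{(w_i)\in\mathcal V}\ \sup_{|u|=1}\Prob\{\norm{\Dt(u^\top X)}_{\HT}\le\varepsilon\}\le C_N\varepsilon^N ,
\]
because $u^\top X=G_r(\sum_iu_iw_i)$ by linearity of \eqref{eq:kernel}. This is hypothesis \eqref{eq:netsmallballassumption} of Lemma~\ref{lem:net}.

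Next we verify the remaining moment and Sobolev hypotheses. Compactness of $\mathcal V$ gives $M=\sup_{\mathcal V}\max_i\norm{w_i}_\infty<\infty$. Lemma~\ref{lem:kernels}, in the form \eqref{eq:uniformsobolev}, then bounds every $\norm{G_r(w_i)}_{k,p}$ uniformly over $\mathcal V$. This gives both \eqref{eq:netLassumption}, since $L_\theta\le\sum_i\norm{DG_r(w_i)}_{\Hh}$, and \eqref{eq:densitysobolevassumption}.

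Lemma~\ref{lem:net} now yields $\sup_{\mathcal V}\E[\lambda_{\min}(\Gamma_X^T)^{-p}]<\infty$, and by \eqref{eq:matrixorder} the same bound for the full matrix. Inequality \eqref{eq:detcomparison} converts these into uniform inverse determinant moments. Finally, Lemma~\ref{lem:density} applied with $\Theta=\mathcal V$ gives \eqref{eq:uniformSchwartz}, which says that the densities form a bounded subset of $\Sch(\R^n)$.

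The only step with real content is the construction of the compact family $\cK_{\mathcal V}$ and the check that it excludes zero. This is where both hypotheses are needed: compactness of $\mathcal V$ for compactness of $\cK_{\mathcal V}$, and pointwise linear independence for $0\notin\cK_{\mathcal V}$. Everything else is bookkeeping, because Lemmas~\ref{lem:net} and~\ref{lem:density} were already stated for an arbitrary index set with suprema kept outside the probabilities and expectations.
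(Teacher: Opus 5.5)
Your proposal is correct and follows the paper's proof essentially line for line. The paper also forms the compact family $\{\sum_i u_iw_i\}$ over $\mathcal V\times\Sphere^{n-1}$ and applies Theorem~\ref{thm:mainuniform}, then Lemmas~\ref{lem:net} and~\ref{lem:density} with uniformity in the tuple, and it concludes with \eqref{eq:detcomparison}; you have additionally written out the continuity and uniform Sobolev-bound checks that the paper leaves implicit.
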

\begin{proof}
The family
\[
 \left\{\sum_{i=1}^n u_iw_i:
       (w_1,\ldots,w_n)\in\mathcal V,\ |u|=1\right\}
\]
is compact in $L^\infty(0,T)$ and does not contain zero. Apply Theorem~\ref{thm:mainuniform}, then Lemmas~\ref{lem:net} and~\ref{lem:density}, retaining uniformity in the tuple. The determinant estimate follows from \eqref{eq:detcomparison}.
\end{proof}

\begin{proof}[Proof of Theorem~\ref{thm:hermite}]
The parameters \eqref{eq:hermiteparameters} satisfy \eqref{eq:parameters} for every $q\ge1$, since $q\alpha=2(1-H)<1$; the results of Sections~\ref{sec:deterministic}--\ref{sec:induction} therefore apply with $r=q$. For $T=1$ and $w=1$, Theorem~\ref{thm:mainuniform} and \eqref{eq:indicatorrepresentation} prove \eqref{eq:hermiteonetime}.

For finite-dimensional distributions, reorder the times so that $0<t_1<\cdots<t_n$ and set $T=t_n$. The weights $w_i=\ind_{(0,t_i)}$ are linearly independent. Indeed, if $\sum_i u_iw_i=0$ almost everywhere, examination of $(t_{n-1},t_n)$ gives $u_n=0$, examination of $(t_{n-2},t_{n-1})$ gives $u_{n-1}=0$, and so on, with $(0,t_1)$ giving the final coefficient. The case $n=1$ is immediate. Theorem~\ref{thm:mainvector} applies, and multiplying each component by $c_{H,q}>0$ preserves the inverse-moment and Schwartz conclusions.

For the stated increments take $T=b_n$ and $w_i=\ind_{(a_i,b_i)}$. These weights have disjoint supports of positive measure and are linearly independent. The same theorem applies directly. Thus neither independence of increments nor a determinant-factorization transfer is needed.
\end{proof}

\begin{remark}[Overlapping increments]
The weighted-vector theorem also covers overlapping increments whenever their indicator weights are linearly independent. Non-overlap is a simple sufficient condition, not a requirement of the argument. Conversely, any deterministic linear relation among the interval indicators yields the same almost sure relation among the increments.
\end{remark}

\begin{remark}[Scope of the uniformity]\label{rem:scopeuniformity}
The uniformity established here is over admissible compact families of weights at fixed order $q$, fixed kernel parameter $\alpha$ and fixed $T$. It is not asserted as $q$ tends to infinity, as $H$ approaches an endpoint of $(1/2,1)$, or as observation times coalesce. Different endpoint locations generally do not give a norm-continuous family of indicator weights in $L^\infty$: two distinct interval indicators have $L^\infty$ distance one. Consequently, Proposition~\ref{prop:uniformvectors} is not a grid-uniform theorem for arbitrary time grids, and it does not imply the grid-uniform density-tail estimates of~\cite{Moldavskaya2026}. Uniform bounds under moving endpoints or coalescing times require an additional argument and are not inferred here from the compact-family result.
\end{remark}

\begin{remark}[Dependence on the kernel structure]\label{rem:kernelstructure}
The conclusion does not hold for arbitrary elements of a fixed Wiener chaos. If $F=W(e)^2-1$ with $\norm e_{\Hh}=1$, then $\norm{DF}_{\Hh}=2|W(e)|$ and $\E\norm{DF}_{\Hh}^{-p}=\infty$ for $p\ge1$. The example shows why a general polynomial smoothness criterion must include a substantive nondegeneracy hypothesis; it does not rule out applying such a criterion once its hypotheses have been verified. The decisive property in the present setting is that multiplication by $a_{h_u}$ preserves nonvanishing throughout an arbitrarily large sphere of deterministic directions, and Lemma~\ref{lem:tail} supplies exactly this property for the Hermite kernel, at every chaos level and uniformly over admissible families.
\end{remark}

\appendix
\section{The fourth-order argument through quadratic forms}\label{app:four}

This appendix gives an alternative proof of the one-time result at order four. It is not used in the all-order induction. Its purpose is to exhibit the intermediate mechanism between the quadratic-form method at order three~\cite{Moldavskaya2026} and the uniform induction above.

Let $T=1$, $q=4$, $\alpha=(1-H)/2\in(0,1/4)$, and $F=Z_1^{H,4}$; we write $D_1=\Dt$ for $T=1$. Fix $0<\tau<1$. Repeated use of Lemma~\ref{lem:directional} gives, for $h,g\in C_c^\infty(0,\tau)$,
\begin{equation}\label{eq:fourderivatives}
 D_hF=4c_{H,4}G_3(a_h),\qquad
 D_gD_hF=12I_2(k_{h,g}),
\end{equation}
where
\begin{equation}\label{eq:fourkernel}
 k_{h,g}(x,y)
 =c_{H,4}\int_0^1a_h(s)a_g(s)(s-x)_+^{-\beta}(s-y)_+^{-\beta}\dd s
 =c_{H,4}K_2[a_ha_g](x,y).
\end{equation}
This kernel is symmetric and Hilbert--Schmidt by Lemma~\ref{lem:kernels}.

Define $U:L^2(0,1)\to L^2(\R)$ initially on smooth functions by
\[
 (Uu)(x)=\int_0^1u(s)(s-x)_+^{-\beta}\dd s.
\]
The energy identity and Schur's bound for $|s-t|^{-\alpha}$ give a bounded extension to $L^2(0,1)$. For $0<x<1$, its restriction is the right-sided fractional integral $\Gamma(1-\beta)I_{1-}^{1-\beta}u$. The semigroup argument used in the base case of Proposition~\ref{prop:induction}, valid also for $u\in L^2(0,1)\subset L^1(0,1)$, proves injectivity. Therefore
\begin{equation}\label{eq:fourrange}
 \overline{\Ran(U^*)}=L^2(0,1).
\end{equation}

Let $A_{h,g}$ be the operator associated with $k_{h,g}$. Pairing against smooth compactly supported test functions and applying absolute Fubini gives
\begin{equation}\label{eq:fourfactorization}
 A_{h,g}=c_{H,4}U M_{a_ha_g}U^*,
\end{equation}
where $M_{a_ha_g}$ is multiplication by $a_ha_g$. The identity extends by boundedness to all test vectors in $L^2(\R)$.

If $h,g$ are both nonzero, Lemma~\ref{lem:tail} gives $a_ha_g\ne0$ almost everywhere on $(\tau,1)$. A multiplication operator by such a function has infinite rank. Indeed, on a positive-measure set its absolute value is bounded below by a positive number, and indicators of infinitely many disjoint positive-measure subsets have linearly independent images. If $A_{h,g}$ had finite rank, injectivity of $U$ would imply that $M_{a_ha_g}\Ran(U^*)$ were finite dimensional and hence closed. By \eqref{eq:fourrange} and continuity of the multiplier, its closure contains $M_{a_ha_g}L^2(0,1)$, contradicting infinite rank. Thus
\begin{equation}\label{eq:fourinfiniterank}
 \rank(A_{h,g})=\infty\qquad(h\ne0,\ g\ne0).
\end{equation}

Choose real orthonormal families $h_1,\ldots,h_m$ and $g_1,\ldots,g_\ell$ in $C_c^\infty(0,\tau)$. Put $h_u=\sum_i u_ih_i$ and $g_v=\sum_jv_jg_j$ for unit vectors $u,v$. The map $(u,v)\mapsto A_{h_u,g_v}$ is a finite bilinear combination of fixed Hilbert--Schmidt operators and is therefore continuous in Hilbert--Schmidt norm. Since $|s_N(A)-s_N(B)|\le\norm{A-B}_{\mathrm{op}}\le\norm{A-B}_{\mathrm{HS}}$ for compact operators (Weyl's inequality for singular values), each $s_N(A_{h_u,g_v})$ is continuous on the compact set $\Sphere^{m-1}\times\Sphere^{\ell-1}$. For every fixed integer $N\ge1$, this continuity and \eqref{eq:fourinfiniterank} imply
\begin{equation}\label{eq:foursingularvalue}
 \kappa_N:=\min_{|u|=|v|=1}s_N(A_{h_u,g_v})>0.
\end{equation}

For $R_j(u)=D_{g_j}D_{h_u}F$ and $t=\rho v\in\R^\ell$,
\[
 \sum_{j=1}^\ell t_jR_j(u)=12\rho I_2(k_{h_u,g_v}).
\]
If $\lambda_j(u,v)$ are the eigenvalues of $A_{h_u,g_v}$, ordered by decreasing absolute value, the second-chaos expansion yields
\[
 I_2(k_{h_u,g_v})\ \overset{\mathrm{law}}=\
 \sum_{j\ge1}\lambda_j(u,v)(\xi_j^2-1),
\]
where the $\xi_j$ are independent standard normal variables and the series converges in $L^2$. Indeed, for $k=k_{h_u,g_v}$ the spectral theorem gives $k=\sum_j\lambda_j e_j\otimes e_j$ in $\Hh^{\otimes2}$ for an orthonormal family $(e_j)$. Since $I_2(e_j\otimes e_j)=W(e_j)^2-1$ and the variables $W(e_j)$ are independent standard normals, the $L^2$ isometry of $I_2$ yields the stated expansion. Taking characteristic-function moduli and using \eqref{eq:foursingularvalue} gives
\begin{equation}\label{eq:fourFourier}
 \begin{split}
 \left|\E\exp\left(i\sum_{j=1}^{\ell}t_jR_j(u)\right)\right|
 &=\prod_{j\ge1}\bigl(1+576\rho^2\lambda_j(u,v)^2\bigr)^{-1/4}\\
 &\le\bigl(1+576\kappa_N^2|t|^2\bigr)^{-N/4}.
 \end{split}
\end{equation}
The product is justified by $L^2$ convergence of the series, and each factor is the modulus $|\E\exp(i\vartheta(\xi^2-1))|=(1+4\vartheta^2)^{-1/4}$ with $\vartheta=12\rho\lambda_j(u,v)$. The inequality keeps only the factors with $j\le N$ and uses $|\lambda_j(u,v)|=s_j(A_{h_u,g_v})\ge\kappa_N$ for these $j$, since $A_{h_u,g_v}$ is self-adjoint. If $N>2\ell$, the last bound is integrable over $\R^\ell$, uniformly in $u$. The vectors $(R_1(u),\ldots,R_\ell(u))$ therefore have uniformly bounded densities.

Bessel's inequality now implies
\[
 \sup_{|u|=1}\Prob\{\norm{D_1(D_{h_u}F)}_{L^2(0,1)}\le\varepsilon\}
 \le C_{m,\ell}\varepsilon^\ell.
\]
Since $\ell$ is arbitrary, Lemma~\ref{lem:net} applies to the vector $(D_{h_i}F)_{i=1}^m$. Its coordinates have uniformly finite Sobolev norms for this fixed family, and Lemma~\ref{lem:density} gives a bounded joint density. A final use of Bessel's inequality yields
\[
 \Prob\{\norm{D_1F}_{L^2(0,1)}\le\varepsilon\}
 \le C_m\varepsilon^m\qquad\text{for every }m\ge1.
\]
Equation~\eqref{eq:layercake} proves all inverse moments. Higher derivatives are used to establish a density for the vector of first derivatives, not to compare the two derivative norms pointwise.

% Keep the references and author address together.

\end{document}